\newcommand{\ds}{\displaystyle}
\newcommand{\fracp}[2]{\frac{\partial #1}{\partial #2}}
\newcommand{\be}{\begin{equation}}
\newcommand{\ee}{\end{equation}}
\newcommand{\bi}{\begin{itemize}}
\newcommand{\ei}{\end{itemize}}
\newcommand{\bea}{\begin{eqnarray}}
\newcommand{\eea}{\end{eqnarray}}
\newcommand{\bean}{\begin{eqnarray*}}
\newcommand{\eean}{\end{eqnarray*}}
\newcommand{\ba}{\begin{array}}
\newcommand{\ea}{\end{array}}
\renewcommand{\v}[1]{\mbox{\boldmath $#1$}}
\newcommand{\nn}{\nonumber}
\newcommand{\ov}{\overline}
\newcommand{\rit}{\hbox{\it I\hskip -2pt R}}
\def\srit{\hbox{\scriptsize{\it I\hskip -1.5pt R}}}
\newcommand{\sit}{\hbox{\it I\hskip -5pt S}}
\newcommand{\eps}{\varepsilon}
\renewcommand{\div}{\nabla\cdot}
\newcommand{\intom}{\int_{\Omega}}
\newcommand{\evec}{\mathbf{E}}
\newcommand{\bvec}{\mathbf{B}}
\newcommand{\vvec}{\mathbf{v}}
\newcommand{\jvec}{\mathbf{J}}
\newcommand{\nvec}{\mathbf{n}}
\newcommand{\xvec}{\mathbf{x}}
\newcommand{\uvec}{\mathbf{u}}
\newtheorem{thm}{Theorem}[section]
\newtheorem{lem}[thm]{Lemma}
\newtheorem{rem}{Remark}[section]
\newenvironment{proof}{\noindent{\it Proof. }}{\hfill\rule{2mm}{2mm}\vskip3mm \par}
\newcommand{\verbatimfile}[1]{\expandafter\beginverb\input{#1}}
\newcommand{\beginverb}{\begin{verbatim}}
\newcommand{\Section}{\setcounter{equation}{0} \section}
\def\mvec{{\cal M}}
\def\nvec{{\cal N}}
\def\RRR{{\cal R}}
\def\BB{{\cal B}}
\def\rvec{{\bf r}}
\def\ovr{{\bf \overline r}}
\def\calq{{\cal Q}}
\def\calo{{\cal O}}
\def\intrx{\int_{\srit_x^3}}
\def\intrv{\int_{\srit_v^3}}
\def\intrxrv{\int_{\srit_x^3\times\srit_v^3}}
\def\intq{\int_\calq}
\def\basu{{\bf e}_1}
\def\basd{{\bf e}_2}
\def\bast{{\bf e}_3}
\begin{document}
{\LARGE \bf
Homogenization of the Vlasov equation and of the Vlasov - Poisson
system with a strong external magnetic field}

\bigskip
\noindent Emmanuel Fr\'enod\\
LMAM, Universit\'e de Bretagne Sud \\
Centre Universitaire de Vannes\\
 1 rue de la loi, F-56000 Vannes\\
E-mail: Emmanuel.Frenod@univ-ubs.fr

\medskip
\noindent Eric Sonnendr\"ucker\\
CNRS, Institut Elie Cartan, Universit\'e Henri Poincar\'e - Nancy 1\\
B.P. 239\\
F-54506 Vand{\oe}uvre-l\`es-Nancy Cedex\\
E-mail: Eric.Sonnendrucker@iecn.u-nancy.fr

\bigskip 

{\bf Abstract:} Motivated by the difficulty arising in the numerical
  simulation of the movement of charged particles in presence of a
  large external magnetic field, which adds an additional time scale
  and thus imposes to use a much smaller time step, we perform in this
  paper a homogenization of the Vlasov equation and the Vlasov-Poisson
  system which yield approximate equations describing the mean
  behavior of the particles. The convergence proof is based on the two
  scale convergence tools introduced by N'Guetseng and Allaire. We
  also consider the case where, in addition to the magnetic field, a
  large external electric field orthogonal to the magnetic field and
  of the same magnitude is applied.

{\bf Keywords:} Vlasov-Poisson equations, kinetic equations,
  homogenization, gyrokinetic approximation, multiple time scales, two
  scale convergence.

{\bf Abbreviated title:} Homogenization of the Vlasov equation

\newpage
\Section{Introduction}
In many kind of devices involving charged particles, like electron guns, 
diodes or tokamaks, a large external magnetic field needs to be applied in order
to keep the particles on the desired tracks. In Particle-In-Cell (PIC)
simulations of such devices, this large external magnetic field obviously needs
to be taken into account when pushing the particles. However, due to the
magnitude of the concerned field this often adds a new time scale to the
simulation and thus a stringent restriction on the time step. In order to get rid
of this additional time scale, we would like to find approximate equations, where
only the gross behavior implied by the external field would be retained and
which could be used in a numerical simulation.
 
The trajectory of a particle in  a constant magnetic field $\bvec$ is a
helicoid along the magnetic field lines with a radius proportional to the
inverse of the magnitude of $\bvec$. Hence, when this field becomes very large
the particle gets trapped along the magnetic field lines. However due to the
fast oscillations around the apparent trajectory, its apparent velocity is
smaller than the actual one. This result has been known for some time 
as the "guiding center" approximation, and the
link between the real and the apparent velocity is well known in terms of
$\bvec$. 
We refer to Lee \cite{lee:1983} and Dubin et {\it al} \cite{dubin/etal:1983}
for a complete physical viewpoint review on this subject.
In the case of a cloud of particles, the movement of which is
described by the Vlasov-Poisson equations, the situation is less clear as in
the one particle case because of the non linearity of the problem. In
particular, the question of knowing if the mutual influence of the particles
can be expressed in terms of their apparent motion or if the oscillation
generates additional effects is important.

In this paper, we deduce the "guiding center" approximation in
the framework of partial differential equations via an homogenization
process on the linear Vlasov equation.
Then, we show that in a cloud of particles the mutual influence of the 
particles can be expressed in term of their apparent motion without
any additionnal terms. This is provided applying an homogenization
process on the Vlasov-Poisson system, similar to the one used for the
linear Vlasov equation and using the regularity of the charge density.

Hence, we apply first a homogenization
process to the linear Vlasov equation with a strong and constant external
magnetic field. In other
words  for a constant vector $\mvec \in \sit^2$, 
we consider the following equation:
\be\label{vlasov}\left\{\ba{l}
\ds\fracp{f^{\eps}}{t}+\vvec\cdot\nabla_xf^{\eps}+(\evec^{\eps}+\vvec
\times(\bvec^{\eps}+\frac{\mvec}{\eps}))
\cdot\nabla_vf^{\eps} =0,\\
\ds f^\eps_{|t=0}=f_0.
\ea\right.\ee
In this equation $f^{\eps}\equiv f^{\eps}(t,\xvec,\vvec)$ 
with $t\in [0,T)$, for any $T\in \rit^+$,
$\xvec\in\rit^3_x$ and $\vvec\in\rit^3_v$. For convenience, we introduce the
notations $\Omega=\rit^3_x\times\rit^3_v,$ $\calo=[0,T)\times\rit_x^3$
and $\calq=[0,T)\times\Omega$.
The initial data satisfies
\be\label{ic1}
\ds f_0\geq 0,\quad 0<\intom f_0^2\,d\xvec\,d\vvec < \infty.
\ee
The electric field $\evec^{\eps}\equiv \evec^{\eps}(t,\xvec)$ and the magnetic
field $\bvec^{\eps}\equiv \bvec^{\eps}(t,\xvec)$ are defined on
$\calo,$ both bounded in $L^\infty(0,T;L^2(\rit^3_x))$ and satisfy
\be\label{cvge}
\evec^\eps\rightarrow \evec \mbox{ in } 
L^\infty(0,T;L^2_{loc}(\rit^3_x))\mbox{ strong,}
\ee 
and
\be\label{cvgb}
\bvec^\eps\rightarrow \bvec \mbox{ in } L^\infty(0,T; 
L^2_{loc}(\rit^3_x))\mbox{ strong,}
\ee
for any $T\in \rit^+.$
\newline
With those conditions, for every $\eps$ there exists a unique solution of the equation
(\ref{vlasov}) $f^\eps\in
L^\infty(0,T;$ $L^2(\Omega))$. 
We characterize here the
equation satisfied by the limit $f$ (in some weak topologies) of
$(f^\eps)_\eps$.
\newline The first main result of the paper is the following:
\begin{thm}\label{Th1.1}
Under assumptions (\ref{ic1})-(\ref{cvgb}), the sequence
$(f^\eps)_\eps$ of solutions of the
Vlasov equation (\ref{vlasov}) satisfies, for any $T\in \rit^+$,
$$f^\eps\rightharpoonup f \mbox{ in } L^\infty(0,T;L^2(\Omega)) \mbox{
weak}-*.$$
Moreover, denoting for any vector $\vvec,$ $\vvec_\parallel=$
$(\vvec\cdot\mvec)\mvec,$
$f$ is the unique solution of:
\be\label{vlasovlim}\left\{\ba{l}
\ds\fracp{f}{t}+\vvec_\parallel\cdot\nabla_xf+(\evec_\parallel+\vvec
\times\bvec_\parallel)\cdot\nabla_vf =0, \\
\ds f_{|t=0}=\frac{1}{2\pi} \int_0^{2\pi}f_0(\xvec,\uvec(\vvec,\tau))\,
d\tau,
\ea\right.\ee
where $\uvec(\vvec,\tau)$ is the rotation of angle $\tau$ around $\mvec$
applied to $\vvec$ (see (\ref{defu}) for more details).
\end{thm}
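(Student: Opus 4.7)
The plan is to apply two-scale convergence in the time variable (in the sense of N'Guetseng and Allaire) to the sequence $(f^\eps)$, using the fast variable $\tau = t/\eps$ associated with the cyclotron rotation induced by $\mvec/\eps$. First I would establish the a priori bound: since the vector field in (\ref{vlasov}) is divergence-free in $(\xvec,\vvec)$ (the Lorentz force is perpendicular to $\vvec$), the transport equation preserves the $L^2(\Omega)$ norm, giving $\|f^\eps\|_{L^\infty(0,T;L^2(\Omega))} = \|f_0\|_{L^2(\Omega)}$. Along a subsequence I would extract a weak-$*$ limit $f \in L^\infty(0,T;L^2(\Omega))$, and then a two-scale limit $F(t,\tau,\xvec,\vvec) \in L^\infty(0,T;L^2([0,2\pi]\times\Omega))$, $2\pi$-periodic in $\tau$, with $f(t,\xvec,\vvec)=\tfrac{1}{2\pi}\int_0^{2\pi} F(t,\tau,\xvec,\vvec)\, d\tau$.

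To identify the structure of $F$, I would multiply (\ref{vlasov}) by $\eps\,\phi(t,t/\eps,\xvec,\vvec)$ for smooth $\phi$ compactly supported in $(t,\xvec,\vvec)$ and $2\pi$-periodic in $\tau$, integrate by parts, and pass to the two-scale limit. Only the originally $O(1/\eps)$ contributions survive, producing
\[
\int_0^T\!\!\int_0^{2\pi}\!\intom F \,\bigl[\partial_\tau \phi + (\vvec\times\mvec)\cdot\nabla_v\phi\bigr]\,d\tau\,dt = 0,
\]
so that $\partial_\tau F + (\vvec\times\mvec)\cdot\nabla_v F = 0$ in the distributional sense. The characteristics of this operator are precisely the rotations of $\vvec$ around $\mvec$ encoded by $\uvec$, so $F(t,\tau,\xvec,\vvec) = G(t,\xvec,\uvec(\vvec,-\tau))$ for some $G$.

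For the limit equation I would test (\ref{vlasov}) against $\phi^\eps(t,\xvec,\vvec) = \psi(t,\xvec,\uvec(\vvec,-t/\eps))$ with $\psi$ smooth and compactly supported. Such $\phi^\eps$ corresponds to a profile $\phi(t,\tau,\xvec,\vvec)$ lying in the kernel of $\partial_\tau + (\vvec\times\mvec)\cdot\nabla_v$, which cancels the dangerous $1/\eps$ terms by design. Passing to the two-scale limit, using the strong convergences (\ref{cvge})--(\ref{cvgb}) to handle products involving $\evec^\eps$ and $\bvec^\eps$, performing the orthogonal change of variable $\vvec \mapsto \uvec(\vvec,-\tau)$, and using the averaging identities $\tfrac{1}{2\pi}\int_0^{2\pi}\uvec(\vvec,\sigma)\,d\sigma = \vvec_\parallel$ together with $\uvec(\bvec_\parallel,\sigma)=\bvec_\parallel$ (since $\bvec_\parallel$ is fixed by rotations around $\mvec$), I would obtain the weak form of (\ref{vlasovlim}) for $G$ with $G|_{t=0}=f_0$. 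Since the coefficients $\vvec_\parallel$, $\evec_\parallel$ and $\vvec\times\bvec_\parallel$ all commute with gyro-averaging, averaging over $\tau$ shows that $f$ solves (\ref{vlasovlim}) with the gyro-averaged initial datum stated there. Standard uniqueness for this linear transport equation then upgrades subsequential to full convergence.

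The main technical obstacle I expect is the passage to the two-scale limit in the products $\evec^\eps\cdot\nabla_v\phi^\eps\,f^\eps$ and $(\vvec\times\bvec^\eps)\cdot\nabla_v\phi^\eps\,f^\eps$: the test function $\phi^\eps$ carries genuine fast oscillations in $\tau=t/\eps$ through its dependence on $\uvec(\vvec,-t/\eps)$, so one must simultaneously exploit the strong $L^2_{loc}$ convergences (\ref{cvge})--(\ref{cvgb}) and verify that $\uvec$-composed oscillating functions are admissible test functions in the two-scale framework. A related subtlety is the treatment of the initial time layer, to justify that the effective initial datum is indeed the gyro-average of $f_0$ rather than $f_0$ itself.
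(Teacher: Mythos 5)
Your proposal is correct and follows essentially the same route as the paper: $L^2$ conservation, two-scale convergence, the $\eps$-scaled test to obtain the constraint $\partial_\tau F+(\vvec\times\mvec)\cdot\nabla_vF=0$, resolution of the constraint along the rotation characteristics $\uvec$, constraint-satisfying test functions $\varphi(t,\xvec,\uvec(\vvec,\tau))$ to kill the $1/\eps$ term, gyro-averaging in $\tau$, and uniqueness of the limit equation to upgrade to the whole sequence. The only differences are cosmetic (your normalization $f=\tfrac{1}{2\pi}\int_0^{2\pi}F\,d\tau$ versus the paper's $f=\int_0^{2\pi}F\,d\tau$, and the sign convention in $\uvec(\vvec,\pm\tau)$, both of which wash out after averaging over a full period), and the two technical points you flag are handled in the paper exactly as you anticipate: the strong $L^2_{loc}$ convergence of the fields for the products, and the inclusion of the $t=0$ trace in the weak formulation on $[0,T)\times\Omega$ for the initial layer.
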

The deduction of this Theorem uses the notion of two scale convergence 
introduced by N'Guetseng \cite{nguetseng:1989} and Allaire 
\cite{allaire:1992}. This convergence result is the following:
\begin{thm}\label{N'GA}(N'Guetseng \cite{nguetseng:1989} and Allaire 
\cite{allaire:1992})
If a sequence $f^\eps$ is bounded in $L^\infty(0,T;L^2(\Omega)),$
then for every period $\theta$, there exists a
$\theta$-periodic profile $F_\theta(t,\tau,\xvec,\vvec)\in L^\infty(0,T;
L_\theta^\infty(\rit_\tau; L^2(\Omega)))$ such that for all
$\psi_\theta(t,\tau,\xvec,\vvec)$ regular, with compact support with respect
to $(t,\xvec,\vvec)$ and $\theta$-periodic with respect to $\tau$ we have,
up to a subsequence,
\be\label{0tsv1}
\intq f^\eps\psi_\theta^\eps\,dt\,d\xvec\,d\vvec\rightarrow \intq
\int_0^\theta F_\theta\psi_\theta\,d\tau\,dt\,d\xvec\,d\vvec.\ee
Above, $L_\theta^\infty(\rit_\tau)$ stands for
the space of functions being $L^\infty(\rit)$ and being $\theta$-periodic
and $\psi_\theta^\eps\equiv\psi_\theta(t,\frac{t}{\eps},\xvec,\vvec)$.

The profile $F_\theta$ is called the $\theta$-periodic two scale limit of
$f^\eps$ and 
the link between $F_\theta$ and the weak$-*$ limit $f$ is given by
\be\label{tsv2}
\int_0^\theta F_\theta(t,\tau,\xvec,\vvec)\,d\tau
= f(t,\xvec,\vvec).\ee 
\end{thm}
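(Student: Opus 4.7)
The plan is to reduce the claim to an application of the Riesz representation theorem in a natural $L^1$–$L^\infty$ duality, following the classical N'Guetseng–Allaire scheme transposed to the time variable. First, since $L^\infty(0,T;L^2(\Omega))$ is the dual of the separable space $L^1(0,T;L^2(\Omega))$, Banach–Alaoglu provides a subsequence of $(f^\eps)$ weak-$*$ convergent to some $f\in L^\infty(0,T;L^2(\Omega))$. Next, fix $\theta>0$ and let $\mathcal T_\theta$ denote the space of admissible test functions $\psi_\theta(t,\tau,\xvec,\vvec)$: smooth, compactly supported in $(t,\xvec,\vvec)\in[0,T)\times\Omega$, and $\theta$-periodic in $\tau$. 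Define the linear form
$$L^\eps(\psi_\theta)\;=\;\intq f^\eps\,\psi_\theta^\eps\,dt\,d\xvec\,d\vvec.$$

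Second, I would produce a uniform-in-$\eps$ bound on $L^\eps$ in the correct norm. By Cauchy–Schwarz on $\Omega$ and the hypothesis $\|f^\eps\|_{L^\infty(0,T;L^2(\Omega))}\le C$,
$$|L^\eps(\psi_\theta)|\;\le\;C\int_0^T\|\psi_\theta(t,t/\eps,\cdot,\cdot)\|_{L^2(\Omega)}\,dt.$$
The integrand $h(t,\tau):=\|\psi_\theta(t,\tau,\cdot,\cdot)\|_{L^2(\Omega)}$ is continuous in $t$, $\theta$-periodic in $\tau$ and compactly supported in $t$, so the standard mean-value lemma for two-scale oscillating integrals gives
$$\int_0^T h(t,t/\eps)\,dt\;\longrightarrow\;\frac{1}{\theta}\int_0^T\!\int_0^\theta\|\psi_\theta(t,\tau,\cdot,\cdot)\|_{L^2(\Omega)}\,d\tau\,dt.$$
Consequently $|L^\eps(\psi_\theta)|\le \widetilde C\,\|\psi_\theta\|_{L^1((0,T)\times[0,\theta];\,L^2(\Omega))}$, uniformly in small $\eps$.

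Third, since $\mathcal T_\theta$ is separable and dense in $L^1((0,T)\times[0,\theta];L^2(\Omega))$, a countable diagonal extraction yields a subsequence along which $L^\eps(\psi_\theta)\to L^0(\psi_\theta)$ for every $\psi_\theta\in\mathcal T_\theta$; by uniform boundedness, $L^0$ extends to a bounded linear functional on the whole space $L^1((0,T)\times[0,\theta];L^2(\Omega))$. The Riesz representation theorem in this duality delivers a unique $F_\theta \in L^\infty((0,T)\times[0,\theta];L^2(\Omega))=L^\infty(0,T;L^\infty_\theta(\rit_\tau;L^2(\Omega)))$ such that $L^0(\psi_\theta)=\intq\int_0^\theta F_\theta\,\psi_\theta\,d\tau\,dt\,d\xvec\,d\vvec$, which is exactly (\ref{0tsv1}). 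Relation (\ref{tsv2}) then follows by specializing to $\psi_\theta$ independent of $\tau$: for such test functions $\psi_\theta^\eps$ does not depend on $\eps$, so $L^\eps(\psi_\theta)\to\intq f\psi_\theta$ by weak-$*$ convergence of $f^\eps$, while the two-scale identity gives $L^0(\psi_\theta)=\intq\psi_\theta(\int_0^\theta F_\theta\,d\tau)$; density of $\tau$-independent test functions in $C^\infty_c([0,T)\times\Omega)$ forces $\int_0^\theta F_\theta\,d\tau=f$.

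The main obstacle is matching the functional-analytic scaffolding to the asymmetric hypothesis $L^\infty(0,T;L^2(\Omega))$ (rather than $L^2$ in time): one has to pair a single power of the $L^\infty_t L^2_{\xvec,\vvec}$ norm of $f^\eps$ against an $L^1_t L^2_{\xvec,\vvec}$ norm of $\psi_\theta^\eps$, which then dictates the target space $L^\infty(0,T;L^\infty_\theta(\rit_\tau;L^2(\Omega)))$ for $F_\theta$ via the correct duality. Verifying separability/density of $\mathcal T_\theta$ in this Bochner space, and applying the periodic averaging lemma with enough regularity to justify the $\eps$-uniform bound, are the two technical points that need the most attention; once they are in place, the Riesz step and the identification of the weak-$*$ limit are routine.
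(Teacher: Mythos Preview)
The paper does not prove this theorem: it is quoted with attribution to N'Guetseng and Allaire and used as a black box, so there is no ``paper's own proof'' to compare against. Your sketch is a faithful adaptation of the original N'Guetseng--Allaire argument to the time-two-scale setting and to the $L^\infty_tL^2_{\xvec,\vvec}$--$L^1_tL^2_{\xvec,\vvec}$ duality; the identification (\ref{tsv2}) via $\tau$-independent test functions is exactly the standard device.

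One point worth tightening: the phrase ``uniformly in small $\eps$'' is not quite right. The periodic averaging lemma gives only a $\limsup$ bound, $\limsup_{\eps\to0}|L^\eps(\psi_\theta)|\le (C/\theta)\,\|\psi_\theta\|_{L^1((0,T)\times[0,\theta];L^2(\Omega))}$, and the threshold on $\eps$ depends on $\psi_\theta$. For the diagonal extraction you therefore need a genuinely uniform-in-$\eps$ bound first, e.g.\ the crude estimate $|L^\eps(\psi_\theta)|\le C\int_0^T\sup_{\tau}\|\psi_\theta(t,\tau,\cdot)\|_{L^2(\Omega)}\,dt$, which controls $L^\eps$ on a countable dense subset of continuous test functions; once the limit functional $L^0$ exists on that set, the sharper $\limsup$ inequality shows $L^0$ is bounded in the $L^1((0,T)\times[0,\theta];L^2(\Omega))$ norm and hence extends, after which Riesz gives $F_\theta$ in the claimed space. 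With that reordering your argument is complete.
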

This result has been used with success in the context of homogenization of
transport equation with periodically oscillating coefficients by
E \cite{e:1992} and Alexandre and Hamdache \cite{alexandre/hamdache}
and in the context of kinetic equations with strong and periodically 
oscillating coefficients in Fr\'enod \cite{frenod:1994}
and Fr\'enod and Hamdache \cite{frenod/hamdache:1996}.
\newline Here, since the strong magnetic field induces periodic oscillations
in $f^\eps,$ the two scale limit describes well its asymptotic behavior.
Therefore, it is the right tool to tackle homogenization of equation 
(\ref{vlasov}), and Theorem \ref{Th1.1} is a consequence of the following
result concerning the two scale limit of $f^\eps.$
\begin{thm}\label{Th1.2}
Under assumptions (\ref{ic1})-(\ref{cvgb}),
the $2\pi$-periodic two scale limit $F \in L^\infty(0,T;$
$L_{2\pi}^\infty(\rit_\tau; L^2(\Omega)))$ of $f^\eps$ is 
the unique solution of
\be\label{vlastwoscale}\left\{\ba{l}
\ds \fracp{F}{\tau} + (\vvec\times\mvec) \cdot\nabla_v F=0,\label{contr}
\vspace{1mm}\\
\ds \fracp{F}{t}+\vvec_\parallel\cdot\nabla_x F+(\evec_\parallel+\vvec
\times\bvec_\parallel)
\cdot\nabla_v F =0, \\
\ds F_{|t=0}=\frac{1}{2\pi}f_0(\xvec,\uvec(\vvec,\tau)),
\ea\right.\ee
where $\uvec(\vvec,\tau)$ is the rotation of angle $\tau$ around $\mvec$
applied to $\vvec$ (see (\ref{defu}) for more details).
\end{thm}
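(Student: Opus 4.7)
The plan is to pass to the two scale limit in the weak formulation of (\ref{vlasov}), using two well-chosen classes of test functions to produce separately the constraint and the macroscopic equations, and then to close with an $L^2$ energy estimate for uniqueness. First, because the full velocity field $(\vvec,\evec^\eps+\vvec\times(\bvec^\eps+\mvec/\eps))$ is divergence free in $(\xvec,\vvec)$, multiplying (\ref{vlasov}) by $f^\eps$ and integrating yields $\|f^\eps(t,\cdot)\|_{L^2(\Omega)}=\|f_0\|_{L^2(\Omega)}$, so $(f^\eps)$ is uniformly bounded in $L^\infty(0,T;L^2(\Omega))$, and Theorem \ref{N'GA} produces, up to a subsequence, a $2\pi$-periodic two scale limit $F\in L^\infty(0,T;L^\infty_{2\pi}(\rit_\tau;L^2(\Omega)))$.

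To obtain the constraint equation, test (\ref{vlasov}) against $\eps\,\psi(t,t/\eps,\xvec,\vvec)$ with $\psi$ smooth, compactly supported in $(t,\xvec,\vvec)$ and $2\pi$-periodic in $\tau$. After integration by parts (made licit by the divergence free structure) the only terms in the resulting identity that do not carry an explicit $\eps$ factor are $-\intq f^\eps[\partial_\tau\psi^\eps+(\vvec\times\mvec)\cdot\nabla_v\psi^\eps]$; every other term vanishes as $\eps\to 0$, the electric and magnetic contributions by combining the uniform $L^\infty L^2$ bound on $f^\eps$ with the $L^\infty L^2_{loc}$ bound on $\evec^\eps$, $\bvec^\eps$. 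Passing to the two scale limit via (\ref{0tsv1}) yields $\partial_\tau F+(\vvec\times\mvec)\cdot\nabla_v F=0$.

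For the macroscopic equation and the initial datum, test against $\psi^\eps$ with $\psi(t,\tau,\xvec,\vvec)=\phi(t,\xvec,\uvec(\vvec,\tau))$ for a smooth $\phi$ vanishing at $t=T$. A direct chain-rule computation shows that such $\psi$ lies in the kernel of $\partial_\tau+(\vvec\times\mvec)\cdot\nabla_v$, so the two $1/\eps$ contributions cancel exactly. Passing to the limit, combining the two scale convergence of $f^\eps$ with the strong convergence (\ref{cvge})--(\ref{cvgb}) to handle the products $\evec^\eps f^\eps$ and $\bvec^\eps f^\eps$, one obtains a two scale identity for $F$. Performing the change of variable $\mathbf{w}=\uvec(\vvec,\tau)$ in the velocity integral and using that $\int_0^{2\pi}\uvec(\mathbf{a},\tau)\,d\tau=2\pi\,\mathbf{a}_\parallel$ for any $\mathbf{a}\in\rit^3$ projects the transport, electric and magnetic vectors onto their $\mvec$-parallel parts, yielding the macroscopic equation of (\ref{vlastwoscale}); treating the boundary term $\int_\Omega f_0\,\phi(0,\xvec,\vvec)$ by the same change of variable identifies $F|_{t=0}=(1/2\pi)f_0(\xvec,\uvec(\vvec,\tau))$.

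Finally, uniqueness for (\ref{vlastwoscale}) follows from a standard $L^2$ energy estimate on $[0,2\pi]\times\Omega$: both limiting velocity fields are divergence free, so $\|F(t,\cdot)\|_{L^2([0,2\pi]\times\Omega)}$ is conserved in time, and linearity does the rest; this simultaneously upgrades the subsequential convergence to convergence of the full sequence. The main obstacle I anticipate is the passage to the limit, in the macroscopic step, in the products $\evec^\eps\cdot\nabla_v f^\eps$ and $(\vvec\times\bvec^\eps)\cdot\nabla_v f^\eps$: after integration by parts these pair a two scale convergent sequence against one that converges only strongly in $L^\infty L^2_{loc}$, and the strong form of (\ref{cvge})--(\ref{cvgb}) is exactly what is required to legitimize this limit through a weak-times-strong argument.
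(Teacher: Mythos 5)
Your proposal is correct and follows essentially the same route as the paper: the conserved $L^2$ norm gives the uniform bound, the constraint equation is obtained by testing against $\eps$-scaled oscillating test functions, the macroscopic equation by testing against functions $\varphi(t,\xvec,\uvec(\vvec,\tau))$ lying in the kernel of $\partial_\tau+(\vvec\times\mvec)\cdot\nabla_v$ (so the $1/\eps$ terms cancel), followed by the change of variables $\uvec=\uvec(\vvec,\tau)$ and averaging in $\tau$, with the strong convergence (\ref{cvge})--(\ref{cvgb}) handling the products. The one step you leave implicit is the resolution of the constraint equation by characteristics, namely that it forces $F(t,\tau,\xvec,\vvec)=G(t,\xvec,\uvec(\vvec,\tau))$ for a $\tau$-independent $G$ (the paper's Lemmas on this point); that representation is exactly what makes the $\tau$-integration collapse the transport and force terms to their $\mvec$-parallel projections, so it should be stated rather than assumed.
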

\begin{rem}
This last Theorem can be generalised to the case of a non uniform
strong magnetic field, see Remark \ref{ZobiRem}.
\end{rem}
\begin{rem}
The method we develop enables also to deduce the homogenized
equations  when a strong external electric field $\frac\nvec\eps$
is added in equation (\ref{vlasov}). In this case we get, in addition
a $\nvec\times\mvec$ drift.This drift shares a lot with the one used
in the ``guiding center'' approximation.
See Theorems \ref{thth3.1} and \ref{thth3.2} for more details.  
\end{rem}
Secondly, as a relatively direct application of the former results, we may
characterize the asymptotic behavior of a sequence 
$(f^\eps, \evec^\eps)$ of solutions of the following Vlasov-Poisson
system

\be\label{vlaspoiss}\left\{\ba{l}
\ds\fracp{f^{\eps}}{t}+\vvec\cdot\nabla_xf^{\eps}+(\evec^{\eps}+\vvec
\times\frac{\mvec}{\eps})
\cdot\nabla_vf^{\eps} = 0, \\
\ds f^{\eps}_{|t=0}=f_0, \\
\evec^{\eps}=-\nabla u^{\eps}, \;\; -\Delta u^{\eps}= \rho^{\eps}, 
\ea\right.\ee
with
$$\rho^{\eps}=\intrv f^{\eps}\,d\vvec,$$
where $f_0$ is assumed to satisfy
\be\label{ic7}
\ds f_0\geq 0,\quad  f_0\in L^1\cap L^2(\Omega),
\mbox{ and }
0<\intom f_0(1+|v|^2)\,d\xvec\,d\vvec < \infty.
\ee
The second main result of the paper is the following:
\begin{thm}\label{Th1.3}
Under assumption (\ref{ic7}), for each $\eps,$ there exists
a solution $(f^\eps,\evec^\eps)$ of (\ref{vlaspoiss})
in $L^\infty(0,T;L^1\cap L^2(\Omega) ) \times 
L^\infty(0,T;W^{1,\frac75}(\rit^3_x)$
for any $T\in \rit^+.$
Moreover this solution is bounded in 
$L^\infty(0,T;L^1\cap L^2(\Omega) ) \times 
L^\infty(0,T;W^{1,\frac75}(\rit^3_x))$ independently on $\eps.$ 

\noindent If we consider a sequence $(f^\eps,\evec^\eps)$ of such solutions,
extracting a subsequence, we have
$$f^\eps\rightharpoonup f \mbox{ in } L^\infty(0,T;L^2(\Omega)) \mbox{
weak}-*,$$
$$\evec^\eps\rightarrow \evec \mbox{ in } 
L^\infty(0,T;L^2_{loc}(\rit^3_x))\mbox{ strong,} $$ 
and, denoting for any vector $\vvec,$ $\vvec_\parallel=$
$(\vvec\cdot\mvec)\mvec,$ the limit $(f,\evec)$ satisfies 
\be\label{vlaspoisslim}\left\{\ba{l}
\ds\fracp{f}{t}+\vvec_\parallel\cdot\nabla_xf+
\evec_\parallel\cdot\nabla_vf =0, \\
\ds f_{|t=0}=\frac{1}{2\pi} \int_0^{2\pi}f_0(\xvec,\uvec(\vvec,\tau))\,
d\tau,\\
\ds\evec=-\nabla u, \;\; -\Delta u=\rho, 
\ea\right.\ee
with
\be\label{pinoderch1}
\rho=\intrv f \,d\vvec,
\ee
where $\uvec(\vvec,\tau)$ is the rotation of angle $\tau$ around $\mvec$
applied to $\vvec$ (see (\ref{defu}) for more details).
\end{thm}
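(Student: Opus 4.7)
The idea is to regard (\ref{vlaspoiss}) as the linear equation (\ref{vlasov}) with $\bvec^\eps \equiv 0$ and a self--consistent electric field, and then to apply Theorem \ref{Th1.1}. The only genuinely new ingredient is establishing, for the coupled problem, the strong convergence of $\evec^\eps$ required by hypothesis (\ref{cvge}).

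\emph{Step 1: uniform bounds.} For fixed $\eps$, existence of a weak solution is classical (Arsen'ev / DiPerna--Lions). Since the characteristic field $(\vvec,\evec^\eps+\vvec\times\mvec/\eps)$ is divergence free in $(\xvec,\vvec)$, all $L^p(\Omega)$ norms of $f^\eps$ are preserved, so (\ref{ic7}) gives the uniform $L^\infty(0,T;L^1\cap L^2(\Omega))$ bound. The magnetic force $\vvec\times\mvec/\eps$ is orthogonal to $\vvec$ and does no work, so the energy identity reduces to
$$\tfrac12\intom|\vvec|^2 f^\eps\,d\xvec d\vvec + \tfrac12\intrx|\evec^\eps|^2\,d\xvec = \mathrm{const},$$
bounding uniformly the kinetic energy and $\|\evec^\eps\|_{L^2}$. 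Splitting $\int f^\eps d\vvec = \int_{|\vvec|\le R}+\int_{|\vvec|>R}$ and optimizing $R$ using the $L^2_v$ norm and the second moment gives the standard interpolation
$\rho^\eps\in L^\infty(0,T;L^{7/5}(\rit_x^3))$ and, similarly, $\jvec^\eps:=\intrv\vvec f^\eps d\vvec \in L^\infty(0,T;L^{7/6}(\rit_x^3))$ uniformly in $\eps$. Elliptic regularity for $-\lap u^\eps = \rho^\eps$ then yields the claimed bound on $\evec^\eps$ in $L^\infty(0,T;W^{1,7/5}(\rit_x^3))$.

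\emph{Step 2: strong compactness of $\evec^\eps$.} This is the key obstacle. Integrating (\ref{vlaspoiss}) in $\vvec$ produces the continuity equation $\ds\fracp{\rho^\eps}{t} + \div\jvec^\eps = 0$, so the $L^\infty_t L^{7/6}_x$ bound on $\jvec^\eps$ controls $\partial_t\rho^\eps$ in a negative Sobolev space. Coupling this with the $L^\infty_t L^{7/5}_x$ spatial bound and an Aubin--Lions argument gives relative compactness of $\rho^\eps$ in $C([0,T];W^{-1,q}_{loc}(\rit_x^3))$ for a suitable $q$. The two--derivative gain of the Newtonian potential then upgrades this into strong convergence of $\evec^\eps=-\nabla u^\eps$ in $L^\infty(0,T;L^2_{loc}(\rit_x^3))$ along a subsequence. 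The reason this works despite $f^\eps$ being only weakly convergent is that the fast $\mvec/\eps$--oscillations live in the angular velocity variable and are averaged out when one integrates in $\vvec$ to form $\rho^\eps$.

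\emph{Step 3: passing to the limit.} Once Step 2 is established, all hypotheses (\ref{ic1})--(\ref{cvgb}) of Theorem \ref{Th1.1} are satisfied with $\bvec^\eps\equiv\bvec\equiv 0$. Thus $f^\eps\rightharpoonup f$ weakly--$*$ in $L^\infty(0,T;L^2(\Omega))$ and $f$ solves the transport equation in (\ref{vlasovlim}), which, with $\bvec_\parl=0$, is exactly the first line of (\ref{vlaspoisslim}) together with the stated initial data. It remains to pass to the limit in the Poisson equation: using the weak convergence of $f^\eps$ together with the uniform second--moment bound (which controls tails in $\vvec$) one identifies $\rho = \intrv f\,d\vvec$; continuity of the Newton potential on $L^{7/5}$ then yields $-\lap u = \rho$ and $\evec=-\nabla u$, closing (\ref{vlaspoisslim})--(\ref{pinoderch1}).
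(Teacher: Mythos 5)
Your proposal is correct and follows essentially the same route as the paper: conservation of $L^p$ norms and energy for the uniform bounds, Horst's velocity-splitting interpolation for $\rho^\eps\in L^{7/5}$ and $\jvec^\eps\in L^{7/6}$, time-regularity from the continuity equation combined with elliptic regularity and Aubin--Lions to get strong $L^\infty(0,T;L^2_{loc})$ compactness of $\evec^\eps$, and then the linear homogenization theorem. The only cosmetic difference is that you run Aubin--Lions on $\rho^\eps$ and transfer compactness to $\evec^\eps$ through the Newtonian potential, whereas the paper bounds $\partial_t\evec^\eps$ directly in $L^{7/6}$ and applies Aubin--Lions to $\evec^\eps$ itself; the two are interchangeable.
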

This Theorem is a consequence of the following result.
\begin{thm}\label{Th1.4}
Under assumption (\ref{ic7}),
extracting a subsequence, the $2\pi-$periodic two scale limit 
$F\in L^\infty(0,T;L^\infty_{2\pi}(\rit_\tau;L^1\cap L^2(\Omega))) $ of 
$f^\eps$ is solution of
\be\label{vlaspoisstwoscale}\left\{\ba{l}
\ds \fracp{F}{\tau} + (\vvec\times\mvec) \cdot\nabla_v F=0,\label{contr1}
\vspace{1mm}\\
\ds \fracp{F}{t}+\vvec_\parallel\cdot\nabla_x F+\evec_\parallel
\cdot\nabla_v F =0, \\
\ds F_{|t=0}=\frac1{2\pi}f_0(\xvec,\uvec(\vvec,\tau))\\
\ds\evec=-\nabla u, \;\; -\Delta u=\ov\rho, 
\ea\right.\ee
with $\ov\rho$ given by 
\be\label{defrhobar}
\ov\rho=\intrv F \,d\vvec.
\ee
Moreover, $\ov\rho$ does not depend on $\tau$ and
\be\label{rhorhobar}
\ov\rho=\rho.
\ee
\end{thm}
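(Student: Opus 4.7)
The strategy parallels the proof of Theorem \ref{Th1.2}: extract a two-scale limit, derive the constraint equation from an $\eps$-rescaling of the equation, and derive the effective transport equation by testing against functions in the kernel of $\partial_\tau+(\vvec\times\mvec)\cdot\nabla_v$. The genuinely new ingredient is the self-consistent coupling $\evec^\eps\cdot\nabla_v f^\eps$, which requires strong compactness of $\evec^\eps$ in $L^\infty(0,T;L^2_{\mathrm{loc}}(\rit^3_x))$.

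First I would establish the uniform bounds asserted in Theorem \ref{Th1.3}. Because the magnetic force $\frac{\mvec}{\eps}(\vvec\times\cdot)$ is orthogonal to $\vvec$, it contributes nothing to the energy identity, so under (\ref{ic7}) the $L^p(\Omega)$ norms and the kinetic energy $\intrxrv|\vvec|^2 f^\eps\,d\xvec\,d\vvec$ are propagated independently of $\eps$. A Lions--Perthame interpolation then yields $\rho^\eps$ bounded in $L^\infty(0,T;L^{5/3}(\rit^3_x))$, whence by Poisson and Sobolev embedding $\evec^\eps$ is bounded in $L^\infty(0,T;W^{1,7/5}(\rit^3_x))$. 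A Rellich-type argument in $\xvec$, combined with weak temporal continuity from the equation, extracts a subsequence along which $\evec^\eps\to\evec$ strongly in $L^\infty(0,T;L^2_{\mathrm{loc}}(\rit^3_x))$. In parallel, the $L^\infty(0,T;L^2(\Omega))$ bound together with Theorem \ref{N'GA} produces a $2\pi$-periodic two-scale profile $F$ satisfying $\int_0^{2\pi}F\,d\tau=f$.

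Next I would derive the two equations on $F$. Testing (\ref{vlaspoiss}) against $\eps\psi^\eps$ for a smooth, compactly supported, $2\pi$-periodic test function $\psi(t,\tau,\xvec,\vvec)$ and letting $\eps\to 0$, all $O(\eps)$ terms vanish while the $O(\eps^{-1})$ piece yields
\[
\intq\int_0^{2\pi} F\left(\fracp{\psi}{\tau}+(\vvec\times\mvec)\cdot\nabla_v\psi\right)d\tau\,dt\,d\xvec\,d\vvec=0,
\]
the weak form of the constraint. Then I would restrict to $\psi$ satisfying $\partial_\tau\psi+(\vvec\times\mvec)\cdot\nabla_v\psi=0$, so that testing (\ref{vlaspoiss}) with $\psi^\eps$ causes the $\eps^{-1}$ contributions to cancel identically. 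The remaining terms admit a two-scale limit; crucially, the nonlinear term passes to the limit because $\evec^\eps\to\evec$ strongly in $L^2_{\mathrm{loc}}$ and $\evec$ is independent of $\tau$, while $f^\eps$ two-scale converges to $F$. Averaging $\vvec\cdot\nabla_x\psi$ and $\evec\cdot\nabla_v\psi$ along the rotation orbits $\tau\mapsto\uvec(\vvec,\tau)$ --- exactly as in Theorem \ref{Th1.2} --- replaces $\vvec$ and $\evec$ by their parallel components $\vvec_\parallel$ and $\evec_\parallel$, producing the transport equation of (\ref{vlaspoisstwoscale}).

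Lastly, passing to the limit in $-\Delta u^\eps=\rho^\eps$ using the strong convergence of $\evec^\eps$ yields $\evec=-\nabla u$, $-\Delta u=\rho$. To see that $\ov\rho$ is $\tau$-independent I would integrate the constraint over $\vvec$; since $\vvec\times\mvec$ is divergence-free in $\vvec$ and $F$ decays at infinity, $\partial_\tau\ov\rho=0$. The identification $\ov\rho=\rho$ then follows from (\ref{tsv2}) and the $\tau$-independence of $\ov\rho$. The principal obstacle throughout is the passage to the limit in the nonlinear term $\evec^\eps\cdot\nabla_v f^\eps$: the whole argument hinges on the $W^{1,7/5}$ Poisson regularity estimate being uniform with respect to the stiff magnetic scale $\eps^{-1}$, which in turn rests on the fact that the strong magnetic force does no work on the kinetic energy.
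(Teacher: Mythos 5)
Your proposal follows essentially the same route as the paper: uniform $L^2$ and kinetic-energy bounds (the stiff magnetic term does no work), velocity-moment interpolation for $\rho^\eps$, elliptic regularity plus an Aubin--Lions compactness argument giving strong convergence of $\evec^\eps$ in $L^\infty(0,T;L^2_{loc}(\rit^3_x))$, and then a verbatim reuse of the linear two-scale machinery of Section \ref{secHVE}, the strong convergence of $\evec^\eps$ being exactly what allows the product $\evec^\eps\cdot\nabla_v f^\eps$ to pass to the two-scale limit. Two remarks. First, under (\ref{ic7}) the initial datum is only in $L^1\cap L^2$ with finite kinetic energy, so the moment interpolation gives $\rho^\eps$ bounded in $L^\infty(0,T;L^{\frac75}(\rit^3_x))$, not $L^{5/3}$ (the exponent $5/3$ would require an $L^\infty$ bound on $f_0$, which is not assumed); your subsequent conclusion $\evec^\eps$ bounded in $W^{1,\frac75}$ is the correct one and is consistent only with the $L^{\frac75}$ bound, so this is an internal slip rather than a structural gap. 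The ``weak temporal continuity'' you invoke should also be made explicit: the paper bounds $\jvec^\eps$ in $L^\infty(0,T;L^{\frac76})$, uses the continuity equation (\ref{conteq}) to bound $\partial_t\rho^\eps$ in $W^{-1,\frac76}$, and hence $\partial_t\evec^\eps$ in $L^{\frac76}$, which is what feeds Aubin--Lions. Second, you obtain $\partial_\tau\ov\rho=0$ by integrating the constraint equation over $\vvec$ (legitimate, since $\vvec\times\mvec$ is divergence-free in $\vvec$ and $F\in L^1_v$), whereas the paper multiplies the continuity equation by $\eps\varphi(t,\frac{t}{\eps},\xvec)$ and passes to the two-scale limit; both arguments are valid and yours is arguably the more direct.
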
 
This Theorem provides a rigorous justification of the procedures called
subcycling and orbit averaging that are often used in Particle-In-Cell
simulations of the Vlasov-Poisson equations in order to reduce the cost 
of the simulation. This procedure consists in advancing the particles, 
which provide an approximate solution of the Vlasov equation, 
on a smaller time step than the one
used to advance the solution of the Poisson equations,  see
\cite{cohen:1985} for an overview. 

\begin{rem}
We may add in
(\ref{vlaspoiss}) an external magnetic field $\bvec^\eps$ strongly 
converging to
$\bvec.$ The Theorems \ref{Th1.2} and \ref{Th1.1} will also apply in
the same way leading to equation (\ref{vlaspoisstwoscale})
with (\ref{vlaspoisstwoscale}.b) replaced by
$$
\fracp{F}{t}+\vvec_\parallel\cdot\nabla_x F+(\evec_\parallel+\vvec
\times\bvec_\parallel)
\cdot\nabla_v F =0,
$$
and to (\ref{vlaspoisslim}) with (\ref{vlaspoisslim}.a) replaced by
$$
\ds\fracp{f}{t}+\vvec_\parallel\cdot\nabla_xf+
(\evec_\parallel+\vvec \times\bvec_\parallel)\cdot\nabla_vf =0.
$$
\end{rem}
\begin{rem}
the results of this paper are connected to the ones of 
Grenier \cite{grenier:1997,grenier2:1997} and Schochet
\cite{schochet:1994} concerning the pertubation of 
hyperbolic systems by a $\frac1\eps-$depending linear
operator inducing fast oscillations in time.
In those work, the fast  oscillations are canceled applying
a ``reverse-oscillating'' operator to the familly of solutions
of the $\eps-$depending equation whose limit is seeked.
\newline Yet, in the present work, the oscillations are 
treated using an ad-hoc class of oscillating test functions.

\end{rem}

From now on, and with no loss of generality, we set
$\mvec=\basu,$
where $\basu$ is the first vector of the basis $(\basu,\basd,\bast)$ of
$\rit^3$. 

The paper is organized as follow: In section \ref{secHVE}, we provide the
homogenization of the Vlasov equation (\ref{vlasov}). For this purpose, we
first deduce the 
equation satisfied by the two scale limit $F$ of $f^\eps$ and then we obtain
the homogenized equation of Theorem \ref{Th1.1}. 
\newline In section \ref{secSME} we study the case when a strong 
electric field, orthogonal to the strong magnetic field, is added in
the Vlasov equation.
\newline The last section is devoted to the homogenization of the 
Vlasov-Poisson system (\ref{vlaspoiss}). This is an application of the 
result proved in section \ref{secHVP} once the regularity of $\rho^\eps$
implying the strong convergence of $E^\eps$ is exhibited.

\Section{Homogenization of the Vlasov equation with a strong 
external magnetic field}\label{secHVE}
\subsection{Two scale limit of the Vlasov equation}\label{secTSV}

Let us derive first the classical a priori estimate that are available for
the Vlasov equation (\ref{vlasov}).

\begin{lem}\label{estimates}
Under assumption (\ref{ic1}) on the initial condition $f_0$
and (\ref{cvge}),(\ref{cvgb}) on the fields, there exists 
a constant $C$ independent of $\eps$ such that the solution 
$f^\eps$ of the Vlasov equation (\ref{vlasov})  satisfies:
\be\label{estpri}
 \|f^\eps\|_{L^\infty(0,T;L^2(\Omega))}\leq C,
\ee
\end{lem}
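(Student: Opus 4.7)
The plan is to use the standard conservation-of-$L^2$-norm argument for the transport equation (\ref{vlasov}), which works uniformly in $\eps$ precisely because the large term $\mvec/\eps$ enters via a Lorentz-type force whose contribution to the phase-space divergence vanishes.

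First I would verify that the drift vector field
$$\bigl(\vvec,\; \evec^\eps + \vvec\times(\bvec^\eps + \mvec/\eps)\bigr)$$
is divergence-free in $(\xvec,\vvec)$: the $\xvec$-component is independent of $\xvec$, $\evec^\eps$ is independent of $\vvec$, and by direct computation $\nabla_\vvec\cdot(\vvec\times\bvec)=0$ whenever $\bvec$ is independent of $\vvec$. This algebraic cancellation uses no smallness and in particular kills the $\vvec\times(\mvec/\eps)$ contribution, which is what makes an $\eps$-independent bound possible at all.

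Next I would multiply (\ref{vlasov}) by $f^\eps$ and integrate over $\Omega=\rit^3_x\times\rit^3_v$. Using the divergence-free property, the transport terms reorganize as $\tfrac12\,\nabla_{(\xvec,\vvec)}\cdot\bigl(\text{drift}\cdot (f^\eps)^2\bigr)$, which integrates to zero on $\Omega$, yielding the conservation identity
$$\frac{d}{dt}\,\|f^\eps(t,\cdot,\cdot)\|_{L^2(\Omega)}^2=0.$$
Hence $\|f^\eps(t)\|_{L^2(\Omega)} = \|f_0\|_{L^2(\Omega)}$ for every $t\in[0,T)$, and taking $C := \|f_0\|_{L^2(\Omega)}$, which is finite by (\ref{ic1}), gives (\ref{estpri}) with a constant independent of $\eps$.

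The main obstacle is making this calculation rigorous given the low regularity of the data: $\evec^\eps,\bvec^\eps$ lie only in $L^\infty(0,T;L^2(\rit^3_x))$ and $f^\eps$ only in $L^\infty(0,T;L^2(\Omega))$, so the products and integrations by parts above are formal. The standard remedy I would invoke is regularization: mollify the fields and the initial datum, solve the smoothed Vlasov equation classically so that the conservation identity holds literally, and then pass to the limit using the DiPerna--Lions theory of renormalized solutions for linear transport equations with weakly differentiable drifts. This is the same machinery that underlies the existence and uniqueness statement made for $f^\eps$ just above the lemma, so the bound can be read off directly from that construction.
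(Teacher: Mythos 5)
Your proof is correct and follows exactly the paper's argument: multiply by $f^\eps$, use that the phase-space drift (including the singular $\vvec\times\mvec/\eps$ term) is divergence-free so the transport terms integrate to zero, and conclude conservation of the $L^2(\Omega)$ norm. The remark on regularizing to justify the formal computation is a sensible addition that the paper leaves implicit here.
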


\begin{proof}
We multiply the Vlasov equation (\ref{vlasov}) by
$f^\eps$ and easily get
$$\frac{d}{dt}\intom (f^\eps)^2\,d\xvec\,d\vvec +\intom v\cdot\nabla_x 
(f^\eps)^2\,d\xvec\,d\vvec + \intom (\evec^\eps +\vvec\times(\bvec^\eps
+\frac{\mvec}{\eps})) \cdot\nabla_v (f^\eps)^2\,d\xvec\,d\vvec =0.$$
Integrating the second and third terms by parts, they vanish. Hence
$$\frac{d}{dt}\intom (f^\eps)^2\,d\xvec\,d\vvec=0,$$
which means that the $L^2$ norm of $f^\eps$ is conserved and gives us the
estimate on  $f^\eps$ thanks to the bound on the initial condition $f_0$.
\end{proof}

From the a priori estimate obtained in lemma \ref{estimates} we deduce that up
to a subsequence, still denoted by $\eps$:
$$f^\eps \rightharpoonup f \mbox{ in } L^\infty(0,T;L^2(\Omega)), 
\mbox{ weak}-*.$$
Applying the result of N'Guetseng \cite{nguetseng:1989} and Allaire
\cite{allaire:1992} exposed in the Introduction (see Theorem \ref{N'GA})
we may deduce that for every period $\theta$, there exists a
$\theta$-periodic profile $F_\theta(t,\tau,\xvec,\vvec)\in L^\infty(0,T;
L_\theta^\infty(\rit_\tau; L^2(\Omega)))$ such that for all
$\psi_\theta(t,\tau,\xvec,\vvec)$ regular, with compact support with respect
to $(t,\xvec,\vvec)$ and $\theta$-periodic with respect to $\tau$ we have,
up to a subsequence,
\be\label{tsv1}
\intq f^\eps\psi_\theta^\eps\,dt\,d\xvec\,d\vvec\rightarrow \intq
\int_0^\theta F_\theta\psi_\theta\,d\tau\,dt\,d\xvec\,d\vvec.
\ee

The two scale limit of equation (\ref{vlasov}) is led in three steps. In the
first one, using a weak formulation of (\ref{vlasov}) with oscillating test
functions (see Tartar \cite{tartar:1977} and Bensoussan, Lions and Papanicolaou
\cite{bensoussan/lions/papanicolaou:1978}), and using the convergence 
result (\ref{tsv1}), we obtain a
constraint equation for the $\theta$-periodic profile $F_\theta$, for every
$\theta$. The second step is devoted to the resolution of this constraint
equation. This will lead to the natural period $\theta=2\pi$ for the profile.
In the third one, using ad hoc oscillating test functions we deduce the
equation satisfied by F.

\textit{Step 1.} Deduction of the constraint equation: Multiplying the
Vlasov equation (\ref{vlasov}) by
$\psi_\theta^\eps\equiv\psi_\theta(t,\frac{t}{\eps},\xvec,\vvec)$ with
$\psi_\theta(t,\tau,\xvec,\vvec)$ regular, with compact support in
$(t,\xvec,\vvec)\in\calq$ and $\theta$-periodic in $\tau\in\rit_\tau$, 
and integrating by parts using that 
$div_v(\evec^{\eps}+\vvec\times(\bvec^{\eps}+\frac{\mvec}{\eps}))=0$,
we get 
\be\label{tsv3}
\begin{array}{r}
   \ds
\intq f^\eps[(\fracp{\psi_\theta}{t})^\eps 
+\frac{1}{\eps} (\fracp{\psi_\theta}{\tau})^\eps
+\vvec\cdot(\nabla_x \psi_\theta)^\eps
+(\evec^{\eps}+\vvec\times(\bvec^{\eps}+\frac{\mvec}{\eps}))
\cdot(\nabla_v\psi_\theta)^\eps]\,dt\,d\xvec\,d\vvec =
\\ \ds
 -\intom f_0
(\psi_\theta)_{|t=0}^\eps \,d\xvec\,d\vvec.
\end{array}
\ee
Notice that  $\calq=[0,T)\times\Omega$ is not an open set. Consequently,
the contribution of the initial data stand in the left hand side of 
(\ref{tsv3}). \newline
Multiplying (\ref{tsv3}) by $\eps$ and passing to the limit, applying
(\ref{tsv1}), we deduce that the $\theta$-periodic profile $F_\theta$
associated with $f^\eps$ satisfies
\be\label{tsv4}\left\{\ba{l}\ds
\intq\int_0^\theta F_\theta(\fracp{\psi_\theta}{\tau}+(\vvec\times\mvec)\cdot
\nabla_v\psi_\theta)\,d\tau\,dt\,d\xvec\,d\vvec=0,
\\ \ds
F_\theta \mbox{ is $\theta-$periodic in $\tau$.}
\ea\right.
\ee
As this is realized for every $\psi_\theta$ regular, compactly supported in
$(t,\xvec,\vvec)\in \calq$ and $\theta$-periodic in $\tau\in\rit_\tau$, 
it is straightforward to 
show
that (\ref{tsv4}) is equivalent to 
\be\label{tsv5}\left\{\ba{l}\ds
\intrv \int_0^\theta F_\theta(\fracp{\psi_\theta}{\tau}+
(\vvec\times\mvec)\cdot
\nabla_v\psi_\theta)\,d\tau\,dt\,d\vvec=0,
\\ \ds
F_\theta \mbox{ is $\theta-$periodic in $\tau$,}
\ea\right.
\ee
for every $\psi_\theta\equiv\psi_\theta(\tau,\vvec)$ regular, with compact
support in $\vvec$ and $\theta$-periodic in $\tau$, for almost every
$(\xvec,t)$, and also equivalent to
\be\label{tsv6}\left\{\ba{l}\ds
\fracp{F_\theta}{\tau}+(\vvec\times\mvec)\cdot\nabla_v F_\theta=0
\mbox{ in } {\cal D}'(\rit_\tau\times\rit_v^3),
\\ \ds
F_\theta \mbox{ is $\theta-$periodic in $\tau,$}
\ea\right.
\ee
for almost every $(\xvec,t)$. The two
equivalent forms (\ref{tsv5}) and (\ref{tsv6}) will be used in the sequel.
\newline Since the solution of this equation is not unique, 
we call it "constraint equation", and the goal is 
now to derive the form this equation imposes to $F_\theta$.

\textit{Step 2.} Consequences of the constraint equation: Intuitively, the
constraint equation means that $F_\theta$ is constant along the characteristics
of the dynamical system $\dot{V}=V\times\mvec$. As we shall see soon, those
characteristics are helicoids around the magnetic vector $\mvec$. Hence we first
introduce $\uvec(\vvec,\tau)$ the transformation $\vvec\in \rit^3\rightarrow
\uvec(\vvec,\tau)\in \rit^3$ letting invariant the projection of $\vvec$ onto
$\mvec$ and rotating of an angle $\tau$ its projection onto the plane
orthogonal to $\mvec$. We have:
\be\label{defu}
\uvec(\vvec,\tau)=v_1\basu+[v_2 \cos\tau- v_3 \sin\tau]\basd +[v_2 \sin\tau +
v_3 \cos\tau]\bast.\ee

Forgetting for the time being the periodicity ondition let us see what the
constraint equation yields. Consider $F(\tau,\vvec)$ solution of the following
equation
\be\label{tsv7}
\fracp{F}{\tau}+(\vvec\times\mvec)\cdot\nabla_v F=0
\mbox{ in } {\cal D}'(\rit_\tau\times\rit_v^3).\ee

\begin{lem}\label{lem4.1} If  $F(\tau,\vvec)\in
L^\infty(\rit_\tau,L^2(\rit_v^3))$ satisfies (\ref{tsv7}), then there exists a
function $G\in L^2(\rit_u^3)$ such that
\be\label{tsv8}
F(\tau,\vvec)=G(\uvec(\vvec,\tau)).\ee
\end{lem}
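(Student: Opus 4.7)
The strategy is to straighten out the characteristics of the transport operator in (\ref{tsv7}). Since $\mvec$ is both the axis of the rotation family $\uvec(\cdot,\tau)$ and the invariant direction of the vector field $\vvec\times\mvec$, these rotations commute with the transport and should conjugate the equation into a trivial one. I would therefore introduce
$$\tilde F(\tau,\mathbf{w}) := F\bigl(\tau,\uvec(\mathbf{w},-\tau)\bigr),$$
understood by duality: for any test function $\phi(\tau,\mathbf{w})$ with compact support, the (volume-preserving) change of variables $\mathbf{w}=\uvec(\vvec,\tau)$ sets
$$\langle \tilde F,\phi\rangle = \int_{\rit}\int_{\rit^3_v} F(\tau,\vvec)\,\phi\bigl(\tau,\uvec(\vvec,\tau)\bigr)\,d\vvec\,d\tau.$$
The goal is to prove $\partial_\tau\tilde F = 0$ in ${\cal D}'$; combined with $\tilde F\in L^\infty(\rit_\tau;L^2(\rit^3))$ this forces $\tilde F(\tau,\mathbf{w})=G(\mathbf{w})$ for some $G\in L^2(\rit^3_u)$, and reading the definition backwards gives $F(\tau,\vvec)=G(\uvec(\vvec,\tau))$ as desired.

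The key step is a single algebraic identity. Write $R_\tau$ for the orthogonal matrix of the rotation $\vvec\mapsto\uvec(\vvec,\tau)$ and test (\ref{tsv7}) against $\chi(\tau,\vvec):=\phi(\tau,\uvec(\vvec,\tau))$. The chain rule gives $\nabla_v\chi = R_\tau^T(\nabla_w\phi)(\tau,\mathbf{w})$ with $\mathbf{w}=R_\tau\vvec$, so
$$(\vvec\times\mvec)\cdot\nabla_v\chi = \bigl(R_\tau(\vvec\times\mvec)\bigr)\cdot\nabla_w\phi = (\mathbf{w}\times\mvec)\cdot\nabla_w\phi,$$
using $R_\tau\mvec=\mvec$ together with the identity $R_\tau(\vvec\times\mvec)=R_\tau\vvec\times R_\tau\mvec$. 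A direct differentiation of the explicit formula (\ref{defu}) yields $\partial_\tau\uvec(\vvec,\tau)=\mvec\times\mathbf{w}$, and hence
$$\partial_\tau\chi = (\partial_\tau\phi)(\tau,\mathbf{w}) - (\mathbf{w}\times\mvec)\cdot\nabla_w\phi.$$
The two cross terms cancel and $\partial_\tau\chi + (\vvec\times\mvec)\cdot\nabla_v\chi = (\partial_\tau\phi)(\tau,\uvec(\vvec,\tau))$.

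Substituting this identity into the distributional form of (\ref{tsv7}) and then applying the change of variables $\mathbf{w}=\uvec(\vvec,\tau)$ in the resulting integral yields $\langle\tilde F,\partial_\tau\phi\rangle=0$ for every test function, i.e.\ $\partial_\tau\tilde F=0$ in ${\cal D}'$. Standard facts about $L^\infty_\tau L^2_w$ distributions with vanishing $\tau$-derivative then produce $G\in L^2(\rit^3_u)$ with $\tilde F(\tau,\cdot)=G$ for a.e.\ $\tau$, and $\|G\|_{L^2}=\|F(\tau,\cdot)\|_{L^2}$ for a.e.\ $\tau$ because $R_\tau$ is an isometry.

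The only delicate point is the algebraic cancellation in the key identity, which is ultimately the assertion that the flow generated by $\vvec\times\mvec$ is exactly $\tau\mapsto\uvec(\cdot,-\tau)$, i.e.\ the equation is invariant under its own characteristic flow. Because the coefficient $\vvec\times\mvec$ is linear in $\vvec$, hence smooth and divergence-free, no mollification or DiPerna--Lions renormalization is required to upgrade the distributional equation to the pointwise representation $F=G\circ\uvec$.
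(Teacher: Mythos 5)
Your proof is correct and follows essentially the same route as the paper: both arguments rest on the observation that the characteristics of $\partial_\tau+(\vvec\times\mvec)\cdot\nabla_v$ are exactly the rotations $\tau\mapsto\uvec(\cdot,\tau)$ about $\mvec$, so that $F$ must be constant along them. The only real difference is that the paper delegates the passage from the distributional equation to the representation $F=G\circ\uvec$ to a citation of Raviart, whereas you carry out that step explicitly by testing against $\phi(\tau,\uvec(\vvec,\tau))$ and changing variables; your algebra ($R_\tau\mvec=\mvec$, $\partial_\tau\uvec=\mvec\times\mathbf{w}$, preservation of cross products by $R_\tau\in SO(3)$, unit Jacobian) checks out, so this is a self-contained rendering of the same argument.
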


\begin{proof} Following Raviart \cite{raviart:1985}, where weak solutions of
first order hyperbolic equations are derived using their characteristic 
equations, $F$ satisfies (\ref{tsv7}) if and only if it is the translation
along the characteristics $V(\tau;\vvec,s)$ solution of
\be\label{tsv9}
\left\{\begin{array}{l}
\displaystyle\frac{dV}{d\tau} = V\times\mvec, \vspace{2mm}\\
V(s;\vvec,s) = \vvec,
\end{array}\right.\ee
of a function $G$. This means
$$F(\tau,\vvec)=G(V(0;\vvec,\tau)).$$

\begin{rem}
In (\ref{tsv9}) $V(\tau;\vvec,s)$ means the solution at time $t=\tau$ taking
the value $\vvec$ at time $t=s$.
\end{rem}
\noindent As $\mvec$ is the first basis vector $\basu$ of the basis 
$(\basu,\basd,\bast)$
of $\rit_v^3$ an easy computation yields the solution of (\ref{tsv9}):
\be\label{tsv9bis}
V(\tau;\vvec,s)= v_1 \basu+[v_2\cos(\tau-s) +v_3\sin(\tau-s)]\basd +
[-v_2\sin(\tau-s) +v_3\cos(\tau-s)]\bast.\ee
So we have
$$V(0;\vvec,\tau)=\uvec(\vvec,\tau),$$
where $\uvec(\vvec,\tau)$ is defined by (\ref{defu}). And
$F(\tau,\vvec)=G(\uvec(\vvec,\tau))$.
\end{proof}

Now we take the periodicity condition under consideration.
First, because of the previous Lemma and since 
$\uvec(\vvec,\tau)$ is $2\pi$-periodic in $\tau$, if the period
$\theta$ is incommensurable with $2\pi$ a fonction $F(\tau,\vvec)$
solution of (\ref{tsv7}) and $\theta-$periodic in $\tau$ is constant.
As a consequence, the $\theta-$periodic profile $F_ \theta$ contains
no information on the oscillations of $(f^\eps).$ \newline
Beside this, if $\theta$ is (a multiple of) $2\pi,$ a fonction 
$F(\tau,\vvec)$ satisfying (\ref{tsv7}) naturally satisfy the periodicity
condition. More precisely we have the
\begin{lem}\label{lmlm2.3}
A function $F(\tau,\vvec)\in L_{2\pi}^\infty(\rit_\tau, L^2(\rit_v^3))$
satisfies
$$\fracp{F}{\tau}+(\vvec\times\mvec)\cdot\nabla_v F=0
\mbox{ in } {\cal D}'(\rit_\tau\times\rit_v^3),$$
if and only if there exists a function $G\in L^2(\rit_u^3)$ such that
\be\label{tsv10000000000}
F(\tau,\vvec)=G(\uvec(\vvec,\tau)).\ee
\end{lem}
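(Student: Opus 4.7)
The plan is to observe that Lemma \ref{lem4.1} already does most of the forward work, and then to verify the periodicity and the $L^2$ bound in the converse direction, together with a direct check that any function of the form $G(\uvec(\vvec,\tau))$ solves the constraint equation.

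For the direct implication ($\Rightarrow$), I would simply note that every $F\in L^\infty_{2\pi}(\rit_\tau,L^2(\rit_v^3))$ satisfying the constraint equation is in particular an element of $L^\infty(\rit_\tau,L^2(\rit_v^3))$, so Lemma \ref{lem4.1} applies and furnishes $G\in L^2(\rit_u^3)$ with $F(\tau,\vvec)=G(\uvec(\vvec,\tau))$. No use of the periodicity assumption is needed here: it is automatic on the right-hand side.

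For the converse implication ($\Leftarrow$), I would verify three points in order. First, $\uvec(\vvec,\tau)$ is $2\pi$-periodic in $\tau$ by formula (\ref{defu}), hence $F(\tau,\vvec)=G(\uvec(\vvec,\tau))$ is $2\pi$-periodic. Second, for each fixed $\tau$, the map $\vvec\mapsto\uvec(\vvec,\tau)$ is an orthogonal transformation of $\rit^3$ (rotation of angle $\tau$ around $\mvec$), so it preserves the Lebesgue measure; thus $\|F(\tau,\cdot)\|_{L^2(\rit_v^3)}=\|G\|_{L^2(\rit_u^3)}$ uniformly in $\tau$, giving the $L^\infty_{2\pi}(\rit_\tau;L^2(\rit_v^3))$ regularity. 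Third, to check that the constraint equation is satisfied, I would use the characteristic representation from Lemma \ref{lem4.1}: by (\ref{tsv9bis}) the characteristic $V(\tau;\vvec,s)$ of the field $\vvec\times\mvec$ is precisely the rotation of angle $-(\tau-s)$ around $\mvec$, so $V(0;\vvec,\tau)=\uvec(\vvec,\tau)$. Since $G$ is measurable, the function $F(\tau,\vvec)=G(V(0;\vvec,\tau))$ is constant along the characteristics and hence solves the first-order equation in $\mathcal D'(\rit_\tau\times\rit_v^3)$ by the standard weak-solution argument of Raviart \cite{raviart:1985} invoked in the previous lemma.

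The main obstacle, if any, is a regularity subtlety: since $G$ is only in $L^2(\rit_u^3)$, the verification of the constraint equation cannot be done by a pointwise chain rule and must be passed through a smooth approximation $G_n\to G$ in $L^2$. One regularizes $G$, checks the equation pointwise for $G_n(\uvec(\vvec,\tau))$ using the chain rule and the fact that $\partial_\tau \uvec = \uvec\times\mvec = \vvec\times\mvec$ evaluated appropriately along characteristics, and passes to the limit in the distributional formulation, using the isometry property of $\uvec(\cdot,\tau)$ on $L^2$ to control the error terms uniformly in $\tau$. This is routine and completes the equivalence.
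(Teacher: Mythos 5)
Your proof is correct and follows essentially the same route as the paper, which states Lemma \ref{lmlm2.3} as an immediate consequence of Lemma \ref{lem4.1} (whose Raviart-based characteristic argument already gives the ``if and only if'') together with the $2\pi$-periodicity of $\uvec(\vvec,\tau)$ in $\tau$. Your extra verifications — the measure-preserving property of the rotation giving the $L^\infty_{2\pi}(\rit_\tau;L^2(\rit_v^3))$ bound, and the mollification of $G$ to justify the distributional computation — are exactly the details the paper leaves implicit.
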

Hence, among every possible profile, we are incited to select the 
$2\pi-$ periodic one.

As a conclusion of this step, applying Lemma \ref{lmlm2.3}, 
the $2\pi$-periodic
profile $F(t,\tau,\xvec,\vvec)=$ $F_{2\pi}(t,\tau,\xvec,\vvec)$ associated
with the sequence $f^\eps,$ solutions of (\ref{vlasov}), writes
\be\label{tsv10}
F(t,\tau,\xvec,\vvec) = G(t,\xvec,\uvec(\vvec,\tau)),
\ee
for a function $G \in L^\infty(0,T; L^2(\rit_x^3\times\rit_u^3)).$

\textit{Step 3.} Equation satisfied by $G$: We now look for the equation
satisfied by $G$ linked to $F$ by (\ref{tsv10}). We have
\begin{lem}\label{lemma2.4ret}
The function $G(t,\xvec,\uvec)$ linked to the  $2\pi$-periodic profile
$F$ by (\ref{tsv10}) is the unique solution of:
\be\label{tsv11}
\begin{array}{l}
   \ds
\fracp{G}{t}+\uvec_\parallel\cdot\nabla_x G
+(\evec_\parallel+\uvec\times\bvec_\parallel)\cdot\nabla_u G =0,
\\ \ds
G_{|t=0} = \frac{1}{2\pi} f_0,
\end{array}
\ee
where for any vector field $\uvec=u_1\basu+u_2\basd+u_3\bast$, we define
$\uvec_\parallel=u_1\basu$ its projection onto the direction of the external
magnetic field $\mvec$.
\end{lem}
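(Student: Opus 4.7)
The plan is to use oscillating test functions adapted to the rotation flow. For any $\phi(t,\xvec,\uvec)$ smooth and compactly supported in $\calq$, I would form
$$\psi(t,\tau,\xvec,\vvec) = \phi\bigl(t,\xvec,\uvec(\vvec,\tau)\bigr),$$
which is $2\pi$-periodic in $\tau$ and satisfies
$$\fracp{\psi}{\tau} + (\vvec\times\mvec)\cdot\nabla_v\psi = 0$$
because $\uvec(\vvec,\tau)$ is constant along the characteristics $dV/d\tau=V\times\mvec$ computed in (\ref{tsv9bis}). With this choice the singular $\frac{1}{\eps}$-contribution in the weak formulation (\ref{tsv3}) cancels exactly, so only the $O(1)$ terms survive.

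I would then pass to the two-scale limit $\eps\to 0$ in (\ref{tsv3}) using Theorem \ref{N'GA}. The weak-$*$ convergence of $f^\eps$ combined with the strong $L^\infty(0,T;L^2_{loc}(\rit_x^3))$-convergence of $\evec^\eps$ and $\bvec^\eps$ makes the limit passage routine, and the initial-data term becomes $-\int_\Omega f_0(\xvec,\vvec)\,\phi(0,\xvec,\vvec)\,d\xvec\,d\vvec$ since $\uvec(\vvec,0)=\vvec$. I would next substitute the representation $F(t,\tau,\xvec,\vvec) = G(t,\xvec,\uvec(\vvec,\tau))$ of Lemma \ref{lmlm2.3} and perform, for each fixed $\tau$, the unit-Jacobian change of variables $\vvec \mapsto \uvec(\vvec,\tau)$; after this the integrand becomes $\tau$-dependent only through rotations applied to the coefficient vectors $\vvec$, $\evec$ and $\bvec$.

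The averaging then reduces to the elementary identity
$$\frac{1}{2\pi}\int_0^{2\pi} \uvec(\mathbf{a},\tau)\,d\tau = \mathbf{a}_\parallel, \qquad \mathbf{a}\in\rit^3,$$
applied to each coefficient; for the magnetic term one additionally invokes the invariance $\uvec(\vvec\times\bvec,\tau) = \uvec(\vvec,\tau)\times\uvec(\bvec,\tau)$ of the cross product under rotation about $\mvec$. This produces exactly the operator $\uvec_\parallel\cdot\nabla_x + (\evec_\parallel+\uvec\times\bvec_\parallel)\cdot\nabla_u$ of (\ref{tsv11}), while the factor $2\pi$ from $\int_0^{2\pi}d\tau$ on the left, absent on the right, yields the $\frac{1}{2\pi}$ in the initial datum.

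Uniqueness of $G$ follows from a standard $L^2$-energy estimate, since the transport field $(\uvec_\parallel,\,\evec_\parallel+\uvec\times\bvec_\parallel)$ is divergence-free in $(\xvec,\uvec)$. The main technical delicacy I anticipate lies in the magnetic term, where one must track carefully that the rotation acts simultaneously on $\vvec$ and on $\bvec$ inside the cross product, so that averaging eliminates the non-parallel component of $\bvec$ while leaving the velocity variable as the full $\uvec$ rather than only $\uvec_\parallel$.
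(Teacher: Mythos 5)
Your proposal is correct and follows essentially the same route as the paper: the test functions $\psi=\varphi(t,\xvec,\uvec(\vvec,\tau))$ killing the $\frac1\eps$-term, the two-scale limit using the strong convergence of $\evec^\eps,\bvec^\eps$, the unit-Jacobian change of variables $\uvec=\uvec(\vvec,\tau)$, and the $\tau$-average producing the parallel projections. The only cosmetic difference is that you phrase the averaging via the identity $\frac{1}{2\pi}\int_0^{2\pi}\uvec(\mathbf{a},\tau)\,d\tau=\mathbf{a}_\parallel$ and the rotation-invariance of the cross product, where the paper carries out the same cancellation explicitly in coordinates (the $\cos\tau$, $\sin\tau$, $\cos\tau\sin\tau$ terms vanishing).
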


\begin{proof}
For any regular function $\varphi$, 
let us consider the regular and $2\pi$-periodic in $\tau$ function 
$\psi(t,\tau,\xvec,\vvec)=$ $\varphi(t,\xvec,\uvec(\vvec,\tau))$ 
which satisfies
\be\label{tsv12}
\fracp{\psi}{\tau} +(\vvec\times\mvec)\cdot \nabla_v\psi =0.\ee
Now take
$\psi^\eps\equiv \psi(t,\frac{t}{\eps},\xvec,\vvec)$ as a test 
function in the
weak formulation (\ref{tsv3}) of the Vlasov equation (\ref{vlasov}). 
Then due to
(\ref{tsv12}), we have
\be\label{tsv13}
\intq f^\eps[(\fracp{\psi}{t})^\eps +\vvec\cdot(\nabla_x\psi)^\eps
+(\evec^{\eps}+\vvec\times\bvec^{\eps})
\cdot(\nabla_v\psi)^\eps]\,dt\,d\xvec\,d\vvec = -\intom f_0
(\psi)_{|t=0}^\eps \,d\xvec\,d\vvec.\ee
Passing to the two scale
limit, using that $\evec^\eps$ and $\bvec^\eps$ converge strongly, yields
\be\label{tsv14}
\intq \int_0^{2\pi} F[\fracp{\psi}{t} +\vvec\cdot\nabla_x\psi
+(\evec+\vvec\times\bvec)\cdot(\nabla_v\psi)]\,d\tau\,dt\,d\xvec\,d\vvec = 
-\intom f_0 \psi(0,0,\xvec,\vvec)\,d\xvec\,d\vvec,\ee
which is equivalent to
\bea\label{tsv15}
\intq \int_0^{2\pi} G(t,\xvec,\uvec(\vvec,\tau))
[\fracp{\varphi}{t}(t,\xvec,\uvec(\vvec,\tau))
+\vvec\cdot\nabla_x\varphi(t,\xvec,\uvec(\vvec,\tau)) \hspace{3cm}&&\nn\\
+(\evec+\vvec\times\bvec)\cdot\nabla_v\{\varphi(t,\xvec,\uvec(\tau,\vvec))\}]
\,d\tau\,dt\,d\xvec\,d\vvec =  -\intom f_0
\varphi(0,\xvec,\uvec(\vvec,0))\,d\xvec\,d\vvec.\eea
Making the change of variables $\uvec=\uvec(\vvec,\tau)$, noticing
that $\vvec= V(\tau;\uvec,0)$ with the notations of (\ref{tsv9}) and
that $d\uvec=d\vvec$, we get on the left-hand-side of (\ref{tsv15})
\bea\label{tsv16}
\intq \int_0^{2\pi} G(t,\xvec,\uvec)[\fracp{\varphi}{t}(t,\xvec,\uvec)
+[u_1\basu+[u_2 \cos\tau+ u_3 \sin\tau]\basd \hspace{3cm}&&\nn\\
+[-u_2 \sin\tau +
u_3 \cos\tau]\bast]\cdot\nabla_x\varphi(t,\xvec,\uvec)
+(\evec+[u_1\basu+[u_2 \cos\tau+ u_3 \sin\tau]\basd &&\nn\\
+[-u_2 \sin\tau +
u_3 \cos\tau]\bast]\times\bvec)\cdot[\fracp{\varphi}{u_1}\basu
+[\cos\tau\fracp{\varphi}{u_2}+\sin\tau\fracp{\varphi}{u_3}]\basd &&\nn\\
+[-\sin\tau\fracp{\varphi}{u_2}+\cos\tau\fracp{\varphi}{u_3}]\bast]
\,d\tau\,dt\,d\xvec\,d\uvec.\eea
Now as neither $G$ nor $\varphi$ depend on $\tau$, let us perform the
integration with respect to $\tau$. This yields, dividing by $2\pi$, 
\bean
&&\intq G[\fracp{\varphi}{t}+u_1\basu\cdot\nabla_x\varphi
+(E_1\fracp{\varphi}{u_1}+B_1(u_3\fracp{\varphi}{u_2}
- u_2\fracp{\varphi}{u_3})]\,dt\,d\xvec\,d\uvec \\
&&~~~~~~~~~~~~~~~~~~~~
 =-\frac{1}{2\pi}\intom f_0\varphi(0,\xvec,\uvec))\,d\xvec\,d\uvec.
\eean
as the terms in $\cos\tau$, $\sin\tau$ and $\cos\tau\sin\tau$ vanish when
integrated between 0 and $2\pi$. This gives us the equation verified by $G$ in
the sense of distributions.
\end{proof}

The uniqueness of the solution of (\ref{tsv11}) enables to deduce that 
the whole sequence $f^\eps$ two scale converges to $F$ and,
because of the link (\ref{tsv2}) between $F$ and $f,$
weakly$-*$ converges to $f.$

\subsection{Proof of Theorems \ref{Th1.1} and \ref{Th1.2}}\label{secHV}
Replacing first $\uvec$ by $\uvec(\vvec,\tau)$ in (\ref{tsv11}) we have
\be\label{ajf0}
\begin{array}{l}
   \ds
\fracp{G}{t}(t,\xvec,\uvec(\vvec,\tau))
+\uvec_\parallel(\vvec,\tau)\cdot\nabla_x G(t,\xvec,\uvec(\vvec,\tau))
\hspace{2cm} \\ \ds ~\hfill
+(\evec_\parallel+\uvec(\vvec,\tau)\times\bvec_\parallel)
\cdot(\nabla_u G)(t,\xvec,\uvec(\vvec,\tau)) =0,
\\ \ds
G(0,\xvec,\uvec(\vvec,\tau))=\frac{1}{2\pi}f_0(\xvec,\uvec(\vvec,\tau)).
\end{array}
\ee
Secondly, we get 
\be\label{ajf1}
(\evec_\parallel+\uvec(\vvec,\tau)\times\bvec_\parallel)
\cdot(\nabla_u G)(t,\xvec,\uvec(\vvec,\tau)) =
(\evec_\parallel+\vvec\times\bvec_\parallel)
\cdot\nabla_v (G(t,\xvec,\uvec(\vvec,\tau))).
\ee
Indeed, straightforward computations give
$$
\begin{array}{l}
   \ds
E_1 \fracp{}{v_1}(G(t,\xvec,\uvec(\vvec,\tau))) =
E_1 \fracp{G}{u_1}(t,\xvec,\uvec(\vvec,\tau)), 
\\ \ds
B_1u_3\fracp{}{v_2}(G(t,\xvec,\uvec(\vvec,\tau)))=
B_1(v_2\sin\tau\cos\tau\fracp{G}{u_2} +
v_3\cos^2\tau\fracp{G}{u_2}+\hspace{20mm}
\\ \ds\hfill
v_2 \sin^2\tau\fracp{G}{u_3}+v_3\sin\tau\cos\tau\fracp{G}{u_3}),
\\ \ds
B_1u_2\fracp{}{v_2}(G(t,\xvec,\uvec(\vvec,\tau)))=
-B_1(v_2\sin\tau\cos\tau\fracp{G}{u_2} +
v_3\sin^2\tau\fracp{G}{u_2}+~~~
\\ \ds\hfill
v_2 \cos^2\tau\fracp{G}{u_3}-v_3\sin\tau\cos\tau\fracp{G}{u_3}),
\end{array}
$$
and summing up these three relations yields (\ref{ajf1}).
Hence (\ref{ajf0}) writes
\be\label{ajf00}
\begin{array}{l}
   \ds
\fracp{G}{t}(t,\xvec,\uvec(\vvec,\tau))
+\uvec_\parallel(\vvec,\tau)\cdot\nabla_x G(t,\xvec,\uvec(\vvec,\tau))
\hspace{2cm} \\ \ds ~\hfill
+(\evec_\parallel+\vvec\times\bvec_\parallel)
\cdot\nabla_v (G(t,\xvec,\uvec(\vvec,\tau)))=0,
\\ \ds
G(0,\xvec,\uvec(\vvec,\tau))=\frac{1}{2\pi}f_0(\xvec,\uvec(\vvec,\tau)),
\end{array}
\ee
and since
$$
F(t,\tau,\xvec,\vvec)=G(t,\xvec,\uvec(\vvec,\tau)),
$$
we obtain equation (\ref{vlastwoscale}.b,c). 
As (\ref{vlastwoscale}.a) is only the constraint, Theorem 
\ref{Th1.2} is proved.
{\hfill\rule{2mm}{2mm}\vskip3mm \par}

Then, in order to achieve the proof of Theorem \ref{Th1.2}, we shall
deduce the equation satisfied by $f$ using the integral relation
(\ref{tsv2}) linking $F$ and $f.$ 


\noindent As
$$
f(t,\xvec,\vvec) = \int_0^{2\pi}F(t,\tau,\xvec,\vvec)\,d\tau=
\int_0^{2\pi}G(t,\xvec,\uvec(\vvec,\tau))\,d\tau,
$$
integrating (\ref{ajf00}) in $\tau,$ gives
\be\label{ajf2}
\begin{array}{l}
   \ds
\fracp{}{t}\left(\int_0^{2\pi}G(t,\xvec,\uvec(\vvec,\tau))\,d\tau \right)
+\vvec_\parallel\cdot\nabla_x 
\left(\int_0^{2\pi}G(t,\xvec,\uvec(\vvec,\tau))\,d\tau \right)\hspace{20mm}
\\ \ds\hfill
+(\evec_\parallel+\vvec\times\bvec_\parallel)
\cdot \nabla_v \left(\int_0^{2\pi}G(t,\xvec,\uvec(\vvec,\tau))\,d\tau \right)
=0,
\\ \ds
\int_0^{2\pi}G(0,\xvec,\uvec(\vvec,\tau))\,d\tau=
\frac{1}{2\pi}\int_0^{2\pi}f_0(\xvec,\uvec(\vvec,\tau))\,d\tau,
\end{array}
\ee
giving the homogenized Vlasov equation
(\ref{vlasovlim}). Hence Theorem \ref{Th1.1} is proved
{\hfill\rule{2mm}{2mm}\vskip3mm \par}

\begin{rem}\label{ZobiRem}
The goal of this remark is to show how we may adapt our method to deduce
the equation for the two scale limit in the case of a non uniform strong 
magnetic field.
We assume that $\mvec(t,\xvec)$ is a smooth function from $\calo$
to the set of vectors with norme 1. That implies
\be\label{AA}
\basu = \RRR (t,\xvec) \mvec(t,\xvec),
\ee
where $\RRR (t,\xvec)$ is a smooth map from $\calo$
to the set of orthogonal matrices.
In this framework, the way to deduce the constraint equation
\be\label{BB}
\fracp{F}{\tau} + (\vvec\times\mvec) \cdot\nabla_v F=0,
\ee
remains the same. As $\RRR(t,\xvec)$ is nothing but the matrix of a 
change of coordinates leading $\mvec(t,\xvec)$ onto $\basu$ this 
constraint implies
\be\label{CC}
F(t,\tau,\xvec,\vvec) = 
G(t,\xvec,\RRR^T(t,\xvec) \uvec(\RRR (t,\xvec)\vvec,\tau)),
\ee
where $\RRR^T$ is the transposed (but also the reverse) matrix of $\RRR.$
In order to give a more usable form to (\ref{CC}), we denote
by
\be\label{DD}
\rvec(\tau) = \left(\ba{ccc}
1&0&0\\
0&\cos\tau&-\sin\tau\\
0&\sin\tau&\cos\tau
              \ea\right),
\ee
and since $\RRR^T \uvec(\RRR\vvec,\tau))= \RRR^T\rvec(\tau) \RRR\vvec,$
(\ref{CC}) also writes
\be\label{EE}
F(t,\tau,\xvec,\vvec) = 
G(t,\xvec,\RRR^T(t,\xvec)\rvec(\tau)\RRR (t,\xvec)\vvec).
\ee
Now, in order to get the equation satisfied by $G,$ we proceed in a similar 
way as in the proof of Lemma \ref{lemma2.4ret}, with this difference that
we use test functions
\be\label{FF}
\psi(t,\tau,\xvec,\vvec)=
\varphi(t,\xvec,\RRR^T(t,\xvec)\rvec(\tau)\RRR (t,\xvec)\vvec).
\ee
Those test functions naturally satisfy the constraint and the computation 
leading to equation (\ref{tsv14}) remains valid. But here,
\be\label{GG}
\fracp{\psi}{t}=\fracp{\varphi}{t}+ 
(\fracp{\RRR^T}{t}\rvec(\tau)\RRR \vvec+
\RRR^T \rvec(\tau)\fracp{\RRR}{t}\vvec)\cdot\nabla_u \varphi,
\ee
\be\label{GGa}
\fracp{\psi}{x_i}=\fracp{\varphi}{x_i}+ 
(\fracp{\RRR^T}{x_i}\rvec(\tau)\RRR \vvec+
\RRR^T \rvec(\tau)\fracp{\RRR}{x_i}\vvec)\cdot\nabla_u \varphi,
\ee
and 
\be\label{HH}
\nabla_v \psi = (\RRR^T\rvec(\tau)\RRR)^T\nabla_u \varphi =
(\RRR^T\rvec(-\tau)\RRR)\nabla_u \varphi.
\ee
Hence, in place of formula (\ref{tsv16}), making the change of variables
$\uvec=\RRR^T\rvec(\tau)\RRR\vvec,$
$(\vvec=\RRR^T\rvec(-\tau)\RRR\uvec)$, we get
\bea\label{II}
\intq \int_0^{2\pi} G(t,\xvec,\uvec)[\fracp{\varphi}{t}+ 
(\fracp{\RRR^T}{t}\RRR \uvec+
\RRR^T \rvec(\tau)\fracp{\RRR}{t}\RRR^T\rvec(-\tau)\RRR\uvec)
\cdot\nabla_u \varphi~~~~~~~~~~~~~~~~ &&\nn\\
+\RRR^T\rvec(-\tau)\RRR\uvec \cdot (\nabla_x\varphi
+\left(\ba{c} 
(\fracp{\RRR^T}{x_1}\RRR \uvec+
\RRR^T \rvec(\tau)\fracp{\RRR}{x_1}\RRR^T\rvec(-\tau)\RRR\uvec)
\cdot\nabla_u \varphi
\\
(\fracp{\RRR^T}{x_2}\RRR \uvec+
\RRR^T \rvec(\tau)\fracp{\RRR}{x_2}\RRR^T\rvec(-\tau)\RRR\uvec)
\cdot\nabla_u \varphi
\\ 
(\fracp{\RRR^T}{x_3}\RRR \uvec+
\RRR^T \rvec(\tau)\fracp{\RRR}{x_3}\RRR^T\rvec(-\tau)\RRR\uvec)
\cdot\nabla_u \varphi
\ea\right) ) &&\nn\\
+\evec\cdot\RRR^T\rvec(-\tau)\RRR\nabla_u \varphi
+((\RRR^T\rvec(-\tau)\RRR\uvec)\times \bvec) \cdot
(\RRR^T\rvec(-\tau)\RRR\nabla_u \varphi)]
\,d\tau\,dt\,d\xvec\,d\uvec &&\nn\\
~~~~~~~~~~~~~~~~~~~~~~~~~~=
-\intom f_0\varphi(0,\xvec,\uvec))\,d\xvec\,d\uvec.
\eea
Since 
$$
\evec\cdot\RRR^T\rvec(-\tau)\RRR\nabla_u \varphi= 
\RRR^T\rvec(\tau)\RRR\evec\cdot\nabla_u \varphi,
$$
and
$$
((\RRR^T\rvec(-\tau)\RRR\uvec)\times \bvec) \cdot
(\RRR^T\rvec(-\tau)\RRR\nabla_u \varphi)=
\RRR^T\rvec(\tau)\RRR\BB\RRR^T\rvec(-\tau)\RRR\uvec \cdot\nabla_u \varphi,
$$
with 
$$
\BB = \left( \ba{ccc}
0&B_3&-B_2\\
-B_3&0&B_1\\
B_2&-B_1&0
      \ea \right),
$$
we finally get, integrating (\ref{II}) in $\tau$ and defining
$$
\ovr = \left( \ba{ccc}
1&0&0\\
0&0&0\\
0&0&0
      \ea \right),
$$
\bea\label{LL}\ds
\fracp{G}{t}+ \RRR^T\ovr\RRR\uvec\cdot\nabla_xG
+(\RRR^T\ovr\RRR\evec+
\frac{1}{2\pi}\int_0^{2\pi}\RRR^T\rvec(\tau)\RRR\BB\RRR^T\rvec(-\tau)\RRR
d\tau \;\uvec)\cdot\nabla_u G~~~~~~~~~~~ &&\nn\\
+\nabla_u \cdot [(\fracp{\RRR^T}{t}\RRR \uvec+ \frac{1}{2\pi}\int_0^{2\pi}
\RRR^T \rvec(\tau)\fracp{\RRR}{t}\RRR^T\rvec(-\tau)\RRR d\tau \;\uvec)G]
~~~~~~~~~~~~~~~~~&&\nn\\
+\nabla_u \cdot [(\frac{1}{2\pi}\int_0^{2\pi}
\RRR^T\rvec(-\tau)\RRR\uvec\cdot 
\left(\ba{c} 
(\fracp{\RRR^T}{x_1}\RRR \uvec+
\RRR^T \rvec(\tau)\fracp{\RRR}{x_1}\RRR^T\rvec(-\tau)\RRR\uvec)G
\\
(\fracp{\RRR^T}{x_2}\RRR \uvec+
\RRR^T \rvec(\tau)\fracp{\RRR}{x_2}\RRR^T\rvec(-\tau)\RRR\uvec)G
\\ 
(\fracp{\RRR^T}{x_3}\RRR \uvec+
\RRR^T \rvec(\tau)\fracp{\RRR}{x_3}\RRR^T\rvec(-\tau)\RRR\uvec)G
\ea\right) )d\tau]=0,&&\nn\\
G_{|t=0} = \frac{1}{2\pi}f_0.\hspace{120mm}
\eea
where for 4 vectors of $\rit^3,$ $A,B,C,D$ we denote 
$$
A\cdot \left(\ba{c} B\\C\\D\ea\right) = A_1B+A_2C+A_3D.
$$
This last equation is the equation for the two scale limit
when the strong magnetic field is non uniform.
\end{rem}

\Section{Homogenization of the Vlasov equation with strong 
external magnetic and electric fields}\label{secSME}

We study here a variant of the previous problem. We  
homogenize the following Vlasov equation with strong 
and constant external magnetic and electric fields:
\be\label{vlasovbis}\left\{\ba{l}
\ds\fracp{f^{\eps}}{t}+\vvec\cdot\nabla_xf^{\eps}+
((\evec^{\eps}+\frac{\nvec}{\eps})+
\vvec\times(\bvec^{\eps}+\frac{\mvec}{\eps}))
\cdot\nabla_vf^{\eps} =0,\\
\ds f^\eps_{|t=0}=f_0,
\ea\right.\ee
for constant vectors $\mvec \in \sit^2,$ $\nvec \in \sit^2,$
$\mvec\perp\nvec$ and under the same assumptions 
(\ref{ic1})-(\ref{cvgb}) as previously.
With no lost of generality, we set
$\mvec=\basu$ and $\nvec=\basd.$ 

\noindent Since the a priori estimate (\ref{estpri}) remains valid
we always have
$$
f^\eps \rightharpoonup  f \mbox{ in } L^\infty(0,T;L^2(\Omega)),
 \mbox{ weak}-*.
$$
And, there exists a
$2\pi$-periodic profile $F(t,\tau,\xvec,\vvec)\in L^\infty(0,T;
L_{2\pi}^\infty(\rit_\tau; L^2(\Omega)))$ such that for all
$\psi(t,\tau,\xvec,\vvec)$ regular, with compact support with 
respect to $(t,\xvec,\vvec)$ and $2\pi$-periodic with respect 
to $\tau$ we have
\be\label{tsv1bis}
\intq f^\eps\psi^\eps\,dt\,d\xvec\,d\vvec\rightarrow
\intq
\int_0^{2\pi} F\psi\,d\tau\,dt\,d\xvec\,d\vvec.
\ee

Proceeding as in section \ref{secTSV} we obtain the
following weak formulation with oscillating test functions:
\be\label{tsv3bis}
\begin{array}{l}
   \ds
\intq f^\eps[(\fracp{\psi}{t})^\eps 
+\frac{1}{\eps} (\fracp{\psi}{\tau})^\eps
+\vvec\cdot(\nabla_x \psi)^\eps \\
   \ds ~~~~
+((\evec^{\eps}+\frac{\nvec}{\eps})+\vvec\times(\bvec^{\eps}+\frac{\mvec}{\eps}))
\cdot(\nabla_v\psi)^\eps]\,dt\,d\xvec\,d\vvec =
 -\intom f_0
(\psi)_{|t=0}^\eps \,d\xvec\,d\vvec,
\end{array}
\ee
which, passing to the limit in $\eps$ yields
\be\label{tsv6bis}
\fracp{F}{\tau}+(\nvec+\vvec\times\mvec)\cdot\nabla_v F=0
\mbox{ in } {\cal D}'(\rit_\tau\times\rit_v^3).
\ee
This constraint equation means that 
$$F(t,\tau,\xvec,\vvec)=G(t,\xvec,V(0;\vvec,\tau)),$$
where  $V(\tau;\vvec,s)$ is solution of
\be\label{tsv9biss}
\left\{\begin{array}{l}
\displaystyle\frac{dV}{d\tau} =
\nvec+V\times\mvec, \vspace{2mm}\\
V(s;\vvec,s) = \vvec.
\end{array}\right.\ee
We have
\be\label{tsv9bisbis}
V(\tau;\vvec,s)= v_1 \basu+
[v_2\cos(\tau-s) +(v_3+1)\sin(\tau-s)]\basd +
[-v_2\sin(\tau-s) +(v_3+1)\cos(\tau-s)-1]\bast,\ee
and then
\be\label{tronchemoi}
V(0;\vvec,\tau)=\uvec(\vvec,\tau) =v_1 \basu+
[v_2\cos\tau -(v_3+1)\sin\tau]\basd +
[+v_2\sin\tau +(v_3+1)\cos\tau-1]\bast.
\ee
Hence we conclude
$$F(t,\tau,\xvec,\vvec)=G(t,\xvec,\uvec(\vvec,\tau)),$$
for a function $G \in L^\infty(0,T; 
L^2(\rit_x^3\times\rit_u^3)).$

\begin{rem}
If $\nvec$ were not orthogonal to $\mvec,$ i.e. if $\nvec=
n_1\basu+\basd,$ the consequence of equation (\ref{tsv6bis})
would be $F(t,\tau,\xvec,\vvec)=G(t,\xvec,\uvec(\vvec,\tau)+n_1\tau\basu).$
Then the periodicity condition and the $L^2$ in $\vvec$ regularity 
of $F$ would imply $G=F=0.$
\end{rem}
Now in order to deduce the equation $G$ satisfies, we take
any regular function $\varphi$
and we consider the regular and $2\pi$-periodic in $\tau$ function 
$\psi(t,\tau,\xvec,\vvec)=$ $\varphi(t,\xvec,\uvec(\vvec,\tau))$ 
which satisfies
\be\label{tsv12bis}
\fracp{\psi}{\tau} +(\nvec+\vvec\times\mvec)
\cdot \nabla_v\psi =0.
\ee
Hence using this test function in the weak formulation 
(\ref{tsv3bis}), the term containing the constraint disappears
and passing then to the limit gives
\be\label{tsv14bis}
\intq \int_0^{2\pi} F[\fracp{\psi}{t} +\vvec\cdot\nabla_x\psi
+(\evec+\vvec\times\bvec)\cdot(\nabla_v\psi)]\,d\tau\,dt\,d\xvec\,d\vvec = 
-\intom f_0 \psi(0,0,\xvec,\vvec)\,d\xvec\,d\vvec.\ee
Replacing $F$ and $\psi$ by theire expressions, (\ref{tsv14bis})
becomes
\bea\label{tsv15bis}
\intq \int_0^{2\pi} G(t,\xvec,\uvec(\vvec,\tau))
[\fracp{\varphi}{t}(t,\xvec,\uvec(\vvec,\tau))
+\vvec\cdot\nabla_x\varphi(t,\xvec,\uvec(\vvec,\tau)) \hspace{3cm}&&\nn\\
+(\evec+\vvec\times\bvec)\cdot\nabla_v\{\varphi(t,\xvec,\uvec(\tau,\vvec))\}]
\,d\tau\,dt\,d\xvec\,d\vvec =  -\intom f_0
\varphi(0,\xvec,\uvec(\vvec,0))\,d\xvec\,d\vvec.\eea
Making the change of variables $\uvec=\uvec(\vvec,\tau)$,
we get on the left-hand-side of (\ref{tsv15bis})
\be\label{tsv16bis}\ba{l}
\ds\intq \int_0^{2\pi} G(t,\xvec,\uvec)[\fracp{\varphi}{t}(t,\xvec,\uvec)
+[u_1\basu+[u_2 \cos\tau+ (u_3+1) \sin\tau]\basd \hspace{3cm}\nn\\
\ds\hspace{5cm}+[-u_2 \sin\tau +
(u_3+1) \cos\tau-1]\bast]\cdot\nabla_x\varphi(t,\xvec,\uvec)\nn\vspace{1mm}\\
\ds+(\evec+[u_1\basu+[u_2 \cos\tau+ (u_3+1) \sin\tau]\basd
\hspace{3cm} \nn\\
\ds~\hspace{3mm}+[-u_2 \sin\tau +
(u_3+1) \cos\tau-1]\bast]\times\bvec)\cdot[\fracp{\varphi}{u_1}\basu
+[\cos\tau\fracp{\varphi}{u_2}+\sin\tau\fracp{\varphi}{u_3}]\basd
\hspace{5mm} \nn\\
\ds~\hfill+[-\sin\tau\fracp{\varphi}{u_2}+\cos\tau\fracp{\varphi}{u_3}]
\bast]
\,d\tau\,dt\,d\xvec\,d\uvec.\ea\ee
Now as neither $G$ nor $\varphi$ depend on $\tau$, let us perform the
integration with respect to $\tau$. This yields, dividing by $2\pi$, 
\bean
&&\intq G[\fracp{\varphi}{t}+(u_1\basu-\bast)\cdot\nabla_x\varphi
+((E_1-B_2)\fracp{\varphi}{u_1}+B_1((u_3+1)\fracp{\varphi}{u_2}
- u_2\fracp{\varphi}{u_3})]\,dt\,d\xvec\,d\uvec \\
&&\hspace{5cm}
 =-\frac{1}{2\pi}\intom f_0\varphi(0,\xvec,\uvec))\,d\xvec\,d\uvec.
\eean
Hence we proved the following result
\begin{thm}
\label{thth3.1}
Under assumptions (\ref{ic1})-(\ref{cvgb}) and with $\mvec=\basu$ and 
$\nvec=\basd,$
the function $G(t,\xvec,\uvec)$ linked to the  $2\pi$-periodic profile
$F$ associated with the solution of (\ref{vlasovbis}) is the unique
solution of:
\be\label{tsv11bis}
\begin{array}{l}
   \ds
\fracp{G}{t}+\left(\ba{c}u_1\\0\\-1 \ea \right)\cdot\nabla_x G
+\left[\left(\ba{c}E_1-B_2\\0\\0 \ea \right)+
\left(\ba{c}u_1\\u_2\\u_3+1 \ea \right)\times
\left(\ba{c}B_1\\0\\0 \ea \right)\right]\cdot\nabla_u G =0,
\\ \ds
G_{|t=0} = \frac{1}{2\pi} f_0,
\end{array}
\ee
\end{thm}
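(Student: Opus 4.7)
The plan is to follow exactly the three-step template of section \ref{secTSV} used for Theorem \ref{Th1.2}, paying careful attention to the modified characteristic flow induced by the extra $\nvec/\eps$ term. First, I would establish the a priori bound $\|f^\eps\|_{L^\infty(0,T;L^2(\Omega))}\leq C$ exactly as in Lemma \ref{estimates}: multiply (\ref{vlasovbis}) by $f^\eps$ and integrate, noting that the strong-field transport term still vanishes after integration by parts because $\nabla_v\cdot(\nvec/\eps+\vvec\times\mvec/\eps)=0$. By Theorem \ref{N'GA}, a subsequence admits a $2\pi$-periodic two-scale limit $F\in L^\infty(0,T;L^\infty_{2\pi}(\rit_\tau;L^2(\Omega)))$.

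Next I would identify the constraint equation by testing (\ref{vlasovbis}) against $\psi^\eps(t,\xvec,\vvec)=\psi(t,t/\eps,\xvec,\vvec)$ with $\psi$ smooth, compactly supported in $(t,\xvec,\vvec)$ and $2\pi$-periodic in $\tau$, multiplying by $\eps$ and passing to the limit; this yields (\ref{tsv6bis}). The characteristics of $\dot V=\nvec+V\times\mvec$ with $\mvec=\basu$, $\nvec=\basd$ are circles centered at the drift velocity $-\bast$ (since $\nvec\times\mvec=-\bast$), with period $2\pi$, so the flow is compatible with the $2\pi$-periodic framework. Solving along characteristics as in Lemma \ref{lem4.1} then forces $F(t,\tau,\xvec,\vvec)=G(t,\xvec,\uvec(\vvec,\tau))$, with $\uvec(\vvec,\tau)$ given by (\ref{tronchemoi}). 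The hypothesis $\nvec\perp\mvec$ is essential here: a component of $\nvec$ parallel to $\basu$ would produce a non-periodic translation in $\tau$ which, combined with the $L^2$-in-$\vvec$ requirement, would force $F\equiv 0$ (as the remark preceding the theorem already indicates).

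Finally, to derive the equation satisfied by $G$, I would use test functions of the form $\psi(t,\tau,\xvec,\vvec)=\varphi(t,\xvec,\uvec(\vvec,\tau))$, which by construction satisfy the constraint (\ref{tsv12bis}) and therefore eliminate the $\eps^{-1}$ singular term from the weak formulation (\ref{tsv3bis}). Passing to the two-scale limit (the strong convergences (\ref{cvge})--(\ref{cvgb}) handle the $\evec^\eps$ and $\bvec^\eps$ factors), changing variables via $\uvec=\uvec(\vvec,\tau)$, and integrating in $\tau$ over $[0,2\pi]$ then produces (\ref{tsv11bis}); uniqueness of $G$ follows from the standard $L^2$ energy estimate since the limit drift is divergence-free in $(\xvec,\uvec)$. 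The main obstacle I anticipate is the bookkeeping around the constant offset $-\bast$ in $\uvec(\vvec,\tau)$: unlike in Lemma \ref{lemma2.4ret}, the characteristics rotate around $-\bast$ rather than the origin, and one must keep track of how the non-oscillatory pieces of this offset survive the $\tau$-average to produce precisely the $-\bast$ drift in the $\xvec$-transport and the $-B_2$ correction in the $u_1$-component of the velocity field that appear in (\ref{tsv11bis}).
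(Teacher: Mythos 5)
Your proposal is correct and follows essentially the same route as the paper: the same a priori $L^2$ bound, the same derivation of the constraint equation (\ref{tsv6bis}) by multiplying the oscillating weak formulation by $\eps$, the same resolution of the constraint along the characteristics of $\dot V=\nvec+V\times\mvec$ yielding $F=G(t,\xvec,\uvec(\vvec,\tau))$ with $\uvec$ as in (\ref{tronchemoi}), and the same choice of constraint-compatible test functions $\varphi(t,\xvec,\uvec(\vvec,\tau))$ followed by the change of variables and the $\tau$-average that produces the $-\bast$ drift and the $E_1-B_2$ coefficient. Your identification of the guiding center at $-\bast=\nvec\times\mvec$ and of the role of $\nvec\perp\mvec$ matches the paper's remark exactly.
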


Now we deduce the equation satisfied by the weak limit $f$ using
the relation linking $F$ and $f.$

\noindent Replacing $\uvec$ by $\uvec(\vvec,\tau)$ in (\ref{tsv11bis}), we have
\be\label{tsv11bisbis}
\begin{array}{l}
   \ds
\fracp{G}{t}(t,\xvec,\uvec(\vvec,\tau))
+\left(\ba{c}u_1(\vvec,\tau)\\0\\-1 \ea \right)\cdot
\nabla_x G(t,\xvec,\uvec(\vvec,\tau)) \\
\hspace{2cm}+\left[\left(\ba{c}E_1-B_2\\0\\0 \ea \right)+
\left(\ba{c}u_1(\vvec,\tau)\\u_2(\vvec,\tau)\\u_3(\vvec,\tau)+1 \ea \right)\times
\left(\ba{c}B_1\\0\\0 \ea \right)\right]
\cdot\nabla_u G(t,\xvec,\uvec(\vvec,\tau)) =0,
\\ \ds
G(0,\xvec,\uvec(\vvec,\tau)) = 
\frac{1}{2\pi} f_0(\xvec,\uvec(\vvec,\tau)).
\end{array}
\ee
Then as
\be\label{eeeeeee}\ba{l}
\ds
\left[\left(\ba{c}E_1-B_2\\0\\0 \ea \right)+
\left(\ba{c}u_1(\vvec,\tau)\\u_2(\vvec,\tau)\\u_3(\vvec,\tau)+1 \ea \right)\times
\left(\ba{c}B_1\\0\\0 \ea \right)\right]
\cdot\nabla_u G(t,\xvec,\uvec(\vvec,\tau)) =
\hspace{2cm}\\ \ds\hfill
\left[\left(\ba{c}E_1-B_2\\0\\0 \ea \right)+
\left(\ba{c}v_1\\v_2\\v_3+1 \ea \right)\times
\left(\ba{c}B_1\\0\\0 \ea \right)\right]
\cdot\nabla_v(G(t,\xvec,\uvec(\vvec,\tau))),
\ea\ee
and as
$$
f(t,\xvec,\vvec) = \int_0^{2\pi}F(t,\tau,\xvec,\vvec)\,d\tau=
\int_0^{2\pi}G(t,\xvec,\uvec(\vvec,\tau))\,d\tau,
$$
we have the

\begin{thm}
\label{thth3.2}
Under assumptions (\ref{ic1})-(\ref{cvgb}) and with $\mvec=\basu$ and 
$\nvec=\basd,$ the weak$-*$ limit $f$ of the sequence of
solutions of (\ref{vlasovbis}) is solution of:
\be\label{tsv11fin}
\begin{array}{l}
   \ds
\fracp{f}{t}+\left(\ba{c}v_1\\0\\-1 \ea \right)\cdot\nabla_x f
+\left[\left(\ba{c}E_1-B_2\\0\\0 \ea \right)+
\left(\ba{c}v_1\\v_2\\v_3+1 \ea \right)\times
\left(\ba{c}B_1\\0\\0 \ea \right)\right]\cdot\nabla_v f =0,
\\ \ds
f_{|t=0} = \frac{1}{2\pi}\int_0^{2\pi} f_0(\xvec, \uvec(\vvec,\tau)) \,d\tau,
\end{array}
\ee
with $\uvec(\vvec,\tau)$ given by (\ref{tronchemoi})
\end{thm}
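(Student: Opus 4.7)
The plan is to obtain Theorem \ref{thth3.2} as a direct corollary of Theorem \ref{thth3.1}, exactly in the spirit of the passage from Theorem \ref{Th1.2} to Theorem \ref{Th1.1} carried out in Section \ref{secHV}. Concretely, I start from equation (\ref{tsv11bisbis}), which is (\ref{tsv11bis}) pulled back along the change of variables $\uvec\mapsto \uvec(\vvec,\tau)$, and I then integrate the resulting identity in $\tau$ over $[0,2\pi]$. The link between $f$ and the two scale profile,
\[
f(t,\xvec,\vvec)=\int_0^{2\pi}F(t,\tau,\xvec,\vvec)\,d\tau
=\int_0^{2\pi}G(t,\xvec,\uvec(\vvec,\tau))\,d\tau,
\]
which comes from (\ref{tsv2}), is what converts the $\tau$-integrated equation for $G\circ\uvec$ into an equation for $f$.

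The main computational points are the following. First, the $\partial_t$ and $\nabla_x$ terms pass through the $\tau$-integral trivially. For the $\nabla_x$ term one uses that the first component of $\uvec(\vvec,\tau)$ is simply $u_1(\vvec,\tau)=v_1$, which does not depend on $\tau$; hence the drift vector $(u_1(\vvec,\tau),0,-1)^T=(v_1,0,-1)^T$ pulls out of the $\tau$-integral and produces the term $(v_1\basu-\bast)\cdot\nabla_x f$ appearing in (\ref{tsv11fin}). Second, for the velocity derivative term I invoke identity (\ref{eeeeeee}), which replaces $\nabla_u G$ evaluated at $\uvec(\vvec,\tau)$ by $\nabla_v$ of the composition $G(t,\xvec,\uvec(\vvec,\tau))$, modulated by a vector that is again $\tau$-independent, namely
\[
\left(\ba{c}E_1-B_2\\0\\0\ea\right)+\left(\ba{c}v_1\\v_2\\v_3+1\ea\right)\times\left(\ba{c}B_1\\0\\0\ea\right).
\]
This vector therefore also pulls out of the $\tau$-integral, and after exchanging $\nabla_v$ with $\int_0^{2\pi}\!d\tau$ one recognises $\nabla_v f$ and recovers the velocity advection part of (\ref{tsv11fin}). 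Finally, integrating the initial condition $G(0,\xvec,\uvec(\vvec,\tau))=\tfrac{1}{2\pi}f_0(\xvec,\uvec(\vvec,\tau))$ in $\tau$ yields exactly $f_{|t=0}=\tfrac{1}{2\pi}\int_0^{2\pi}f_0(\xvec,\uvec(\vvec,\tau))\,d\tau$.

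The only genuinely non-trivial ingredient, namely the identity (\ref{eeeeeee}), has already been established in the text (it is the analogue of (\ref{ajf1}) for the shifted rotation $\uvec(\vvec,\tau)$ given by (\ref{tronchemoi})); its proof is the same componentwise computation as in Section \ref{secHV}, using that the extra constant shift by $-\bast$ in $\uvec$ has zero $\vvec$-gradient and thus does not affect the chain rule. Consequently, I anticipate no serious obstacle: once (\ref{tsv11bisbis}) and (\ref{eeeeeee}) are in hand, Theorem \ref{thth3.2} follows by the two-line manipulation above. The point requiring a little care is to justify the interchange of $\nabla_v$ with $\int_0^{2\pi}\!d\tau$, but this is a standard differentiation under the integral sign in view of the $L^\infty(0,T;L^2)$ regularity of $F$ and the smoothness of $\tau\mapsto\uvec(\vvec,\tau)$, and can be carried out weakly by testing against $\varphi\in\mathcal{D}(\calq)$ exactly as in the derivation of (\ref{ajf2}) in the magnetic-field-only case.
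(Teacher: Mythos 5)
Your proposal is correct and follows exactly the paper's own route: the paper likewise passes from Theorem \ref{thth3.1} to Theorem \ref{thth3.2} by substituting $\uvec(\vvec,\tau)$ into (\ref{tsv11bis}) to get (\ref{tsv11bisbis}), invoking identity (\ref{eeeeeee}), and integrating in $\tau$ via $f=\int_0^{2\pi}G(t,\xvec,\uvec(\vvec,\tau))\,d\tau$. Your observations that $u_1(\vvec,\tau)=v_1$ and that the coefficient vectors are $\tau$-independent are precisely what makes the integration go through, so nothing is missing.
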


\begin{rem}
As
$$ \left(\ba{c}0\\0\\-1 \ea \right) = \nvec\times\mvec,$$
the presence of $-1$ on the third componant of the advection vector 
shares a lot with the additional $\nvec\times\mvec$ drift effect 
usually found in the "guiding center" approximation with same order
electric and magnetic fields.
\end{rem}
\Section{Homogenization of the Vlasov-Poisson system with a strong 
external magnetic field}\label{secHVP}

Assume that the initial condition satisfies the hypothesis (\ref{ic7}). 
Then for $\varepsilon >0$ given there exists a solution 
in $L^\infty(0,T;L^1\cap L^2(\Omega) ) \times 
L^\infty(0,T;W^{1,\frac75}(\rit^3_x))$ for any $T\in\rit^+$
of the Vlasov-Poisson system (\ref{vlaspoiss}).
Indeed for a given $\varepsilon >0$ a force field 
associated to a constant magnetic field has simply been added to the system
studied  by Horst and Hunze \cite{horst/hunze} and their result can be extended 
easily to our case. 

Now in order to apply the results of section \ref{secHVE} and pass to the 
two-scale limit of the Vlasov equation, we need to prove that there exists
a sequence of electric fields solution of the Vlasov-Poisson
system (\ref{vlaspoiss}) verifying 
$$\evec^\eps\rightarrow \evec \mbox{ in } 
L^\infty(0,T;L^2_{loc}(\rit^3_x))\mbox{ strong,}$$
for any $T\in \rit^+.$
For this, we shall need some apriori estimates independent on $\eps$.

\begin{lem}\label{estimates2}
Assume that the initial condition $f_0$ is such that
  $f_0\in L^2(\Omega)$
and $\intom f_0|v|^2\,d\xvec d\vvec$ is bounded, then there exists a constant
$C$ independent of $\eps$ such that the solution ($\evec^\eps,
f^\eps$) of the Vlasov-Poisson equations (\ref{vlaspoiss}) satisfies,
for any $T\in\rit^+,$
$$ \|f^\eps\|_{L^\infty(0,T;L^2(\Omega))}\leq C,$$
$$ \|(|v|^2\,f^\eps)\|_{L^\infty(0,T;L^1(\Omega))}\leq C,$$
and moreover
$$\|\rho^\eps(\xvec,t)\|_{L^\infty(0,T;L^{\frac75}(\rit_x^3))}\leq C,$$
$$\|\jvec^\eps(\xvec,t)\|_{L^\infty(0,T;L^{\frac76}(\rit_x^3))}\leq C,$$
where $\rho^\eps(\xvec,t)=\int f^\eps\,d\,\vvec$ and 
$\jvec^\eps(\xvec,t)=\int \vvec f^\eps\,d\,\vvec$.
\end{lem}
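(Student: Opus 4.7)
The plan is to obtain the four bounds in the same order they are stated, since each one feeds into the next.

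First, I would repeat verbatim the argument of Lemma \ref{estimates} to get the $L^\infty(0,T;L^2(\Omega))$ bound on $f^\eps$. Multiplying (\ref{vlaspoiss}.a) by $f^\eps$, integrating over $\Omega$, and integrating by parts in $\xvec$ and $\vvec$: the $x$-transport term and the force term both vanish because $\div_v(\evec^\eps+\vvec\times\frac{\mvec}{\eps})=0$ (the constant part $\mvec/\eps$ contributes nothing after integration, just as in the linear case). This yields $\frac{d}{dt}\|f^\eps\|_{L^2(\Omega)}^2=0$, hence the first bound with $C=\|f_0\|_{L^2}$.

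Next I would derive energy conservation to bound $\||\vvec|^2 f^\eps\|_{L^1}$. Multiplying (\ref{vlaspoiss}.a) by $|\vvec|^2$ and integrating, the large magnetic term drops out because $(\vvec\times(\bvec^\eps+\mvec/\eps))\cdot\vvec=0$, so
\be\nn
\frac{d}{dt}\intrxrv |\vvec|^2 f^\eps\,d\xvec\,d\vvec=2\intrx \evec^\eps\cdot\jvec^\eps\,d\xvec.
\ee
Using $\evec^\eps=-\nabla u^\eps$, integration by parts, the continuity equation $\partial_t\rho^\eps+\div\jvec^\eps=0$ (obtained by integrating the Vlasov equation in $\vvec$), and $-\Delta u^\eps=\rho^\eps$, the right-hand side becomes $-\frac{d}{dt}\intrx|\nabla u^\eps|^2\,d\xvec$. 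Hence the total energy
\be\nn
\mathcal{E}^\eps(t)=\intrxrv |\vvec|^2 f^\eps\,d\xvec\,d\vvec+\intrx|\nabla u^\eps|^2\,d\xvec
\ee
is constant in $t$. The initial potential energy $\|\nabla u^\eps(0)\|_{L^2}^2$ is controlled by $\|\rho^\eps(0)\|_{L^{6/5}}^2$ via Hardy--Littlewood--Sobolev (or equivalent elliptic regularity), and the classical interpolation sketched below bounds $\|\rho(0)\|_{L^{6/5}}$ by $\|f_0\|_{L^2}$ and $\intom f_0|\vvec|^2$, both finite by (\ref{ic7}). This gives $\sup_t \mathcal{E}^\eps(t)\le C$ independent of $\eps$.

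Finally the bounds on $\rho^\eps$ and $\jvec^\eps$ are pure velocity-space interpolation at fixed $(t,\xvec)$. Splitting $\intrv f^\eps\,d\vvec$ at $|\vvec|=R$ and applying Cauchy--Schwarz on $\{|\vvec|<R\}$ and Tchebychev on $\{|\vvec|>R\}$ gives
\be\nn
\rho^\eps(t,\xvec)\le C R^{3/2}\|f^\eps(t,\xvec,\cdot)\|_{L^2_v}+R^{-2}\intrv|\vvec|^2 f^\eps\,d\vvec;
\ee
optimizing in $R$ yields $\rho^\eps\le C \|f^\eps\|_{L^2_v}^{4/7}(\intrv|\vvec|^2 f^\eps)^{3/7}$, and an analogous argument (with the extra factor $|\vvec|$) gives $|\jvec^\eps|\le C\|f^\eps\|_{L^2_v}^{2/7}(\intrv|\vvec|^2 f^\eps)^{5/7}$. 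Raising to the $7/5$ and $7/6$ powers respectively and applying H\"older in $\xvec$ with the conjugate pairs $(5/2,5/3)$ and $(6,6/5)$ converts the two already-established global bounds into
\be\nn
\|\rho^\eps\|_{L^{7/5}_x}^{7/5}\le C\|f^\eps\|_{L^2(\Omega)}^{4/5}\,\||\vvec|^2f^\eps\|_{L^1(\Omega)}^{3/5},\quad
\|\jvec^\eps\|_{L^{7/6}_x}^{7/6}\le C\|f^\eps\|_{L^2(\Omega)}^{1/3}\,\||\vvec|^2f^\eps\|_{L^1(\Omega)}^{5/6},
\ee
which are the two remaining estimates uniformly in $t\in[0,T)$ and in $\eps$.

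The only genuinely delicate step is the energy identity: I need to justify carefully that the distributional manipulations (integration by parts in $\xvec$ and the use of Poisson) are legitimate for the regularity class in which $(f^\eps,\evec^\eps)$ is known to exist. The rest is a direct translation of well-known Vlasov--Poisson a priori estimates, and the strong magnetic term $\mvec/\eps$ is harmless throughout because it contributes neither to $\div_v$ nor to $\vvec\cdot(\vvec\times\cdot)$.
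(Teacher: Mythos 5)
Your proposal is correct and follows essentially the same route as the paper: $L^2$ conservation by integration by parts, conservation of the total (kinetic plus potential) energy via the continuity and Poisson equations, and then the velocity-split interpolation with the same optimal exponents $(\int_{\srit_v^3}(f^\eps)^2)^{2/7}(\int_{\srit_v^3}|v|^2f^\eps)^{3/7}$ for $\rho^\eps$ and the analogue for $\jvec^\eps$, followed by H\"older in $\xvec$. Your extra care about the initial potential energy (where, strictly, reaching $L^{6/5}$ for $\rho^\eps(0)$ also uses $f_0\in L^1$ from (\ref{ic7}), not only the $L^2$ and second-moment bounds) is a point the paper passes over silently, so this is a welcome, not a divergent, addition.
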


\begin{proof}
Let us proceed formally in order to simplify the presentation. All the following
is rigorously verified for regularized solutions and then the bounds, that do
not depend on $\eps$, are conserved when passing to the limit with respect to
the regularizing parameter.

Multiplying the Vlasov equation by $|\vvec|^2$ and integrating
with respect to $ \xvec$ and $\vvec$, we get
\be\label{vlasen}
\frac{d}{dt}\intom f^\eps |\vvec|^2 \,d\vvec\,d\xvec
-2\intrx\jvec^\eps\cdot\evec^\eps\,d\xvec =0.
\ee
Now, integrating the Vlasov equation with respect to $\vvec$ yields the 
continuity equation
\be\label{conteq}
\fracp{\rho^\eps}{t}+\div\jvec^\eps=0.
\ee
Using this, we obtain
\bean
\intrx \jvec^\eps\cdot\evec^\eps\,d\xvec &=& 
-\intrx \jvec^\eps\cdot\nabla u^\eps\,d\xvec = 
\intrx \div\jvec^\eps\, u^\eps\,d\xvec,\\
&=&  -\intrx\fracp{\rho_\eps}{t}\,u^\eps\,d\xvec.
\eean
But the Poisson equation yields
$$\frac{1}{2}\frac{d}{dt}\intrx(\nabla u^\eps)^2\,d\xvec = 
\intrx \fracp{\rho_\eps}{t}\,u^\eps\,d\xvec.$$
Hence (\ref{vlasen}) becomes
$$\frac{d}{dt}(\intom f^\eps |\vvec|^2 \,d\vvec\,d\xvec 
+\intrx(\nabla u^\eps)^2\,d\xvec)=0.$$
Finally, integrating this last equation with respect to $t$ and using the
hypothesis on the initial condition $f_0$, we get the bound on
$\intom f^\eps |\vvec|^2 \,d\vvec\,d\xvec $.

Let us now proceed with the $L^2$ estimate on
$f^\eps$. To this aim we multiply the Vlasov equation  by
$f^\eps$ and easily get
$$\frac{d}{dt}\intom (f^\eps)^2\,d\xvec\,d\vvec +\intom v\cdot\nabla_x 
(f^\eps)^2\,d\xvec\,d\vvec + \intom (\evec^\eps +\vvec\times
\frac{\mvec}{\eps}) \cdot\nabla_v (f^\eps)^2\,d\xvec\,d\vvec =0.$$
Integrating the second and third terms by parts, they vanish. Hence
$$\frac{d}{dt}\intom (f^\eps)^2\,d\xvec\,d\vvec=0,$$
which means that the $L^2$ norm of $f^\eps$ is conserved and gives us the
estimate on  $f^\eps$ thanks to the bound on the initial condition $f_0$.

We now come to the last estimates, namely those on $\rho^\eps$ and $\jvec^\eps$.
Following the idea of Horst \cite{horst:1981}, the estimates for 
$\rho^\eps$ and
$\jvec^\eps$ can be obtained by decomposition of the velocity integral:
$$\rho^\eps(\xvec,t)=\intrv f^\eps\,d\vvec = \int_{|v|<R} f^\eps\,d\vvec +
\int_{|v|>R} f^\eps\,d\vvec, $$
for any $R>0$. Using the Cauchy-Schwartz inequality, we have
$$\int_{|v|<R} f^\eps\,d\vvec \leq (\int_{|v|<R}
(f^\eps)^2\,d\vvec)^\frac{1}{2} (\int_{|v|<R}d\vvec)^\frac{1}{2}
\leq C_1 R^\frac{3}{2}(\intrv (f^\eps)^2\,d\vvec)^\frac{1}{2},$$
and
$$\int_{|v|>R} f^\eps\,d\vvec \leq (\int_{|v|>R} \frac{|v|^2}{R^2}
f^\eps\,d\vvec \leq \frac{1}{R^2}\intrv |v|^2 f^\eps\,d\vvec.$$
Hence, we have for any $R>0$
$$|\rho^\eps(\xvec,t)|\leq C_1 R^\frac{3}{2}(\intrv
(f^\eps)^2\,d\vvec)^\frac{1}{2} + \frac{1}{R^2}\intrv |v|^2 f^\eps\,d\vvec.$$
Taking the $R$ which minimizes the right-hand-side we obtain
$$|\rho^\eps(\xvec,t)|\leq C_2 (\intrv (f^\eps)^2\,d\vvec)^\frac{2}{7} (\intrv
|v|^2 f^\eps\,d\vvec)^\frac{3}{7}, $$
and finally
\bean
\intrx |\rho^\eps(\xvec,t)|^\frac{7}{5}\,d\xvec &\leq & C_3 \intrx  (\intrv
(f^\eps)^2\,d\vvec)^\frac{2}{5} (\intrv |v|^2 f^\eps\,d\vvec)^\frac{3}{5}
\,d\xvec, \\
&\leq & C_3 (\intrxrv (f^\eps)^2\,d\xvec\,d\vvec)^\frac{2}{5} (\intrxrv
|v|^2 f^\eps\,d\xvec\,d\vvec)^\frac{3}{5} ,
\eean
thanks to the H\"older inequality. Now, knowing that the terms on the
right-hand-side are bounded, we have our estimate on $\rho^\eps$.

We can proceed with $\jvec^\eps$ in exactly the same way:
$$|\jvec ^\eps(\xvec,t)|\leq\intrv |\vvec| f^\eps\,d\vvec = \int_{|v|<R} |\vvec|
f^\eps\,d\vvec + \int_{|v|>R} |\vvec| f^\eps\,d\vvec, $$
for any $R>0$. Using the Cauchy-Schwartz inequality, we have
$$\int_{|v|<R} |\vvec| f^\eps\,d\vvec \leq (\int_{|v|<R}
(f^\eps)^2\,d\vvec)^\frac{1}{2} (\int_{|v|<R} R^2 d\vvec)^\frac{1}{2},$$
and
$$\int_{|v|>R} |\vvec| f^\eps\,d\vvec \leq \int_{|v|>R} \frac{|v|^2}{R}
f^\eps\,d\vvec.$$
Here again we can find the minimizing $R$ and use H\"older's inequality to
obtain
$$\intrx |\jvec^\eps(\xvec,t)|^\frac{7}{6}\,d\xvec \leq C_4 (\intrxrv 
(f^\eps)^2\,d\xvec\,d\vvec)^\frac{1}{6} (\intrxrv |v|^2
f^\eps\,d\xvec\,d\vvec)^\frac{5}{6},$$
which gives us the estimate on $\jvec^\eps$.
\end{proof}

Now, on the one hand, thanks to the classical regularizing properties of the
Laplacian, $\rho^\eps(\xvec,t)$ bounded in $L^\infty(0,T;L^{\frac75}(\rit_x^3))$
implies that  $u^\eps$ such that $-\Delta u^\eps =\rho^\eps$ is bounded in
$L^\infty(0,T;W^{2,\frac75}(\rit_x^3))$ and hence $\evec^\eps=-\nabla u^\eps$
is bounded in $L^\infty(0,T;W^{1,\frac75}(\rit_x^3))$.
On the other hand, integrating the Vlasov equation with respect to $\vvec$, 
we get the continuity equation 
$$\fracp{\rho^\eps}{t}+\div\jvec^\eps=0.$$
Hence, as $\jvec^\eps$ is bounded in $L^\infty(0,T;L^{\frac76}(\rit_x^3))$,
$\fracp{\rho^\eps}{t}$ is bounded in $L^\infty(0,T;W^{-1,\frac76}(\rit_x^3))$, 
and as we have
$$-\Delta  \fracp{u^\eps}{t} = \fracp{\rho^\eps}{t},$$
the regularizing properties of the Laplacian now yield $\fracp{u^\eps}{t}$ 
bounded in $L^\infty(0,T;W^{1,\frac76}(\rit_x^3))$, which yields that 
$\fracp{\evec^\eps}{t}$ is bounded in $L^\infty(0,T;L^{\frac76}(\rit_x^3))$. 
\newline Thus as
$$W^{1,\frac75}(\rit_x^3) \subset L^2_{loc}(\rit_x^3) 
\subset L^{\frac76}_{loc}(\rit_x^3),$$
the first injection being compact and the second being continuous, the 
Aubin-Lions Lemma (see for example Lions \cite{lions}) yields that 
the functional space 
$$
{\cal U}= \{\evec\in L^\infty(0,T;W^{1,\frac75}(\rit_x^3)),
\fracp{\evec}{t}\in L^\infty(0,T;L^{\frac76}(\rit_x^3)) \},$$
provided with the usual product norm,
is compactly embedded in 
$L^\infty(0,T;L^2_{loc}(\rit_x^3))$ and consequently, as 
$\evec^\eps$ is bounded
in ${\cal U}$ there exists a subsequence of $\evec^\eps$
which converges strongly in $L^\infty(0,T;L^2_{loc}(\rit_x^3))$. 
\newline Hence we have 
\begin{lem}
Under assumption (\ref{ic7}), extracting a subsequence, the sequence
$(f^\eps, E^\eps)$
$\in L^\infty(0,T;L^1\cap L^2(\Omega) ) \times 
L^\infty(0,T;W^{1,\frac75}(\rit^3_x))$ of solutions of (\ref{vlaspoiss}) 
satisfies:
$$
f^\eps \rightharpoonup f \mbox{ in }
L^\infty(0,T;L^2(\Omega)) \mbox{ weak}-*,
$$
$$
f^\eps \mbox{ two scale converges to } F \in 
L^\infty(0,T;L^\infty_{2\pi}(\rit_\tau;L^1\cap L^2(\Omega))),
$$
$$
\evec^\eps \rightarrow \evec \mbox{ in }
L^\infty(0,T;L^2_{loc}(\rit_x^3))\mbox{ strong.}
$$
Moreover,
$$
\rho^\eps \rightharpoonup \rho \mbox{ in }
L^\infty(0,T;L^{\frac75}(\rit_x^3))\mbox{ weak}-*,
$$
$$
\rho^\eps \mbox{ two scale converges to } \ov\rho \in 
L^\infty(0,T;L^\infty_{2\pi}(\rit_\tau;L^{\frac75}(\rit_x^3))),
$$
$$
\jvec^\eps \rightharpoonup \jvec \mbox{ in }
L^\infty(0,T;L^{\frac76}(\rit_x^3)))\mbox{ weak}-*,
$$
$$
\jvec^\eps \mbox{ two scale converges to } \ov\jvec \in
L^\infty(0,T;L^\infty_{2\pi}(\rit_\tau;L^{\frac76}(\rit_x^3))),
$$
for any $T\in \rit^+.$
\end{lem}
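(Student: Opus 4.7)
The plan is to assemble the seven convergences listed in the lemma by combining the uniform a priori bounds of Lemma \ref{estimates2}, the two-scale compactness theorem of N'Guetseng and Allaire (Theorem \ref{N'GA} and its $L^p$ analogues for $1<p<\infty$), and the Aubin-Lions argument already displayed in the paragraph immediately preceding the lemma. Since each extraction only thins the subsequence, a diagonal procedure gives at the end a single subsequence (still denoted by $\eps$) along which all the stated convergences hold simultaneously.

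First, Lemma \ref{estimates2} provides uniform bounds of $f^\eps$ in $L^\infty(0,T;L^1\cap L^2(\Omega))$, of $\rho^\eps$ in $L^\infty(0,T;L^{\frac75}(\rit_x^3))$ and of $\jvec^\eps$ in $L^\infty(0,T;L^{\frac76}(\rit_x^3))$. Each of these spaces is the dual of a separable Banach space, so the Banach-Alaoglu theorem yields subsequences converging weak-$*$ to limits that we name $f$, $\rho$, $\jvec$. Applying Theorem \ref{N'GA} to $f^\eps$, and its obvious $L^p$ variant to $\rho^\eps$ and $\jvec^\eps$, we extract further subsequences admitting $2\pi$-periodic two-scale profiles $F$, $\ov\rho$ and $\ov\jvec$ in exactly the spaces stated.

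The strong convergence of $\evec^\eps$ follows from the argument already given in the text just above the lemma: the bound on $\rho^\eps$ in $L^{\frac75}$ combined with the Calder\'on--Zygmund estimate for the Newtonian potential gives $\evec^\eps=-\nabla u^\eps$ bounded in $L^\infty(0,T;W^{1,\frac75}(\rit_x^3))$, while differentiating $-\Delta u^\eps=\rho^\eps$ in time and using the continuity equation $\fracp{\rho^\eps}{t}+\div\jvec^\eps=0$ together with the $L^{\frac76}$-bound on $\jvec^\eps$ yields $\fracp{\evec^\eps}{t}$ bounded in $L^\infty(0,T;L^{\frac76}(\rit_x^3))$. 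Since $W^{1,\frac75}(\rit_x^3) \subset L^2_{loc}(\rit_x^3) \subset L^{\frac76}_{loc}(\rit_x^3)$ with the first injection compact, the Aubin-Lions lemma supplies relative compactness of $\{\evec^\eps\}$ in $L^\infty(0,T;L^2_{loc}(\rit_x^3))$, hence strong convergence of a further subsequence to the already identified weak-$*$ limit $\evec$.

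The only point that is not entirely mechanical is the application of a two-scale convergence result to $\rho^\eps$ and $\jvec^\eps$, whose natural spaces are not $L^2$-based. This extension is by now standard: for $1<p<\infty$ the relevant periodic Bochner space is the dual of a separable Banach space in which the smooth, compactly supported, $2\pi$-periodic test functions form a dense subspace, so the proof of Theorem \ref{N'GA} carries over verbatim. Once this is observed, the rest reduces to a diagonal extraction producing one subsequence that realises all seven convergences at once.
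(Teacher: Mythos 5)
Your proposal is correct and follows essentially the same route as the paper: uniform bounds from Lemma \ref{estimates2}, Banach--Alaoglu and the N'Guetseng--Allaire compactness (in its $L^p$ form for $\rho^\eps$ and $\jvec^\eps$) for the weak-$*$ and two-scale limits, and the elliptic regularity plus continuity-equation plus Aubin--Lions argument for the strong convergence of $\evec^\eps$. Your explicit remarks on the $L^p$ extension of two-scale convergence and on extracting a single common subsequence are details the paper leaves implicit, but they do not change the argument.
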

Then, passing to the two scale limit in (\ref{vlaspoiss}) applying
the results of section \ref{secHVE} yields system 
(\ref{vlaspoisstwoscale})-(\ref{defrhobar}).
\newline In order to show that $\ov\rho$ does not depend on
$\tau,$ we multiply the continuity equation (\ref{conteq})
by $\varphi(t,\frac{t}{\eps},x)$ with $\varphi(t,\tau,x)$ regular,
with compact support in $t,x$ and periodic in $\tau,$
we integrate it by part and we multiply it by $\eps.$
Passing then to the limit gives
$$
\fracp{\ov\rho}{\tau} = 0.
$$ 
Now, summing up the above arguments proves Theorem \ref{Th1.4}.
{\hfill\rule{2mm}{2mm}\vskip3mm \par}
\noindent Lastly, exactly as in section \ref{secHVE}, by an integration
with respect to $\tau$ we get Theorem \ref{Th1.3}.
{\hfill\rule{2mm}{2mm}\vskip3mm \par}

\bibliographystyle{plain}
\bibliography{biblio}

\end{document}